\documentclass[final,onefignum,onetabnum]{siamart190516}

\usepackage{braket,amsfonts}

\usepackage{array}

\usepackage[caption=false]{subfig}
\captionsetup[subtable]{position=bottom}
\captionsetup[table]{position=bottom}

\usepackage{pgfplots}

\newsiamthm{claim}{Claim}
\newsiamremark{remark}{Remark}
\newsiamremark{hypothesis}{Hypothesis}
\crefname{hypothesis}{Hypothesis}{Hypotheses}

\usepackage{algorithmic}

\usepackage{graphicx,epstopdf}

\Crefname{ALC@unique}{Line}{Lines}

\usepackage{amsopn}

\usepackage{xspace}
\usepackage{amsmath}
\usepackage{amssymb}
\usepackage{bold-extra}
\usepackage[most]{tcolorbox}

\colorlet{texcscolor}{blue!50!black}
\colorlet{texemcolor}{red!70!black}
\colorlet{texpreamble}{red!70!black}
\colorlet{codebackground}{black!25!white!25}


\lstdefinestyle{siamlatex}{%
  style=tcblatex,
  texcsstyle=*\color{texcscolor},
  texcsstyle=[2]\color{texemcolor},
  keywordstyle=[2]\color{texemcolor},
  moretexcs={cref,Cref,maketitle,mathcal,text,headers,email,url},
}

\tcbset{%
  colframe=black!75!white!75,
  coltitle=white,
  colback=codebackground, 
  colbacklower=white, 
  fonttitle=\bfseries,
  arc=0pt,outer arc=0pt,
  top=1pt,bottom=1pt,left=1mm,right=1mm,middle=1mm,boxsep=1mm,
  leftrule=0.3mm,rightrule=0.3mm,toprule=0.3mm,bottomrule=0.3mm,
  listing options={style=siamlatex}
}

\newtcblisting[use counter=example]{example}[2][]{%
  title={Example~\thetcbcounter: #2},#1}

\newtcbinputlisting[use counter=example]{\examplefile}[3][]{%
  title={Example~\thetcbcounter: #2},listing file={#3},#1}

\DeclareTotalTCBox{\code}{ v O{} }
{ 
  fontupper=\ttfamily\color{black},
  nobeforeafter,
  tcbox raise base,
  colback=codebackground,colframe=white,
  top=0pt,bottom=0pt,left=0mm,right=0mm,
  leftrule=0pt,rightrule=0pt,toprule=0mm,bottomrule=0mm,
  boxsep=0.5mm,
  #2}{#1}

\patchcmd\newpage{\vfil}{}{}{}
\flushbottom


\begin{tcbverbatimwrite}{tmp_\jobname_header.tex}
\title{A Chebyshev multidomain adaptive mesh method for reaction-diffusion equations}


\author{Jae-Hun Jung \thanks{Department of Mathematics, POSTECH Mathematical Institute for Data Science (MINDS), Pohang University of Science and Technology, Korea (\email{jung153@postech.ac.kr}).}
\and Daniel Olmos-Liceaga\thanks{Department of Mathematics, University of Sonora, M\'exico (\email{daniel.olmos@unison.mx}).}}

\end{tcbverbatimwrite}
\input{tmp_\jobname_header.tex}

\ifpdf
\hypersetup{ pdftitle={Guide to Using  SIAM'S \LaTeX\ Style} }
\fi


\begin{document}
\maketitle

\begin{tcbverbatimwrite}{tmp_\jobname_abstract.tex}
\begin{abstract}
Reaction-Diffusion equations can present solutions in the form of traveling waves. Such solutions
evolve in different spatial and temporal scales and it is desired to construct numerical
methods that can adopt a spatial refinement at locations with large gradient solutions.
In this work we develop a high order adaptive mesh method based on Chebyshev polynomials with a multidomain approach for the traveling wave solutions of
reaction-diffusion systems, where the proposed method
uses the non-conforming and non-overlapping
spectral multidomain method with the temporal adaptation of the computational mesh. Contrary to the existing multidomain spectral methods for reaction-diffusion equations, the proposed multidomain spectral method solves the given PDEs in each subdomain locally first and the boundary and interface conditions are solved in a global manner. In this way, the method can be parallelizable and is efficient for the large reaction-diffusion system. We show that the proposed method is stable and provide both the one- and two-dimensional numerical results that show the efficacy of the proposed method.
\end{abstract}

\begin{keywords}
  Chebyshev multidomain spectral method,  Reaction-Diffusion equations, Adaptive mesh method
\end{keywords}

\end{tcbverbatimwrite}
\input{tmp_\jobname_abstract.tex}

\section{Introduction}
\label{sec:intro}

\noindent Reaction diffusion systems have been a very active area of research for many decades. They arise in areas such as
population dynamics and epidemiology \cite{brauer}, physiology
\cite{keener_book} and biology \cite{mu01}. Classic examples are the modelling of animal coat patterns \cite{mu01}, the
Belousov-Zhabotinsky (BZ) reaction \cite{epstein,kuramoto,mu01,ty94}, the Hodgkin-Huxley model of the
propagation of the action potential along nerve cells \cite{keener_book}, and models of disease propagation in
an ecosystem \cite{brauer}. The general form of a reaction diffusion (RD) system with space-independent diffusion is given by
\begin{equation}\label{RD_pde}
\begin{array}{lcl}
\frac{\partial \bf{V} }{\partial t}&=& \mathbf{D} \nabla^2 \bf{V} + \bf{F}(\bf{V})\\
\end{array}
\end{equation}
where $\mathbf{V}=(V_1,V_2,...,V_N)$ with $V_i=V_i(\mathbf{x},t)$, $(i=1,\cdots, N)$ can represent the concentrations of $N$ chemical species
in a reaction \cite{epstein}, or the number of susceptible, infected and recovered individuals  at position
$\mathbf{x}$ and time $t$ for a model in epidemiology \cite{brauer}. The function $\bf F$ is called the reaction term
and models the local dynamics due to the interactions among $V_i$. The spatial variation of $V_i(\mathbf{x},t)$ is modelled
with the diffusion term $\mathbf{D} \nabla^2 \bf{V}$, where the matrix $\mathbf{D}$ is the diffusion coefficient matrix.\\

\noindent The solutions of such systems may evolve in at least two time and spatial scales and their numerical
computation with uniform grids becomes expensive \cite{Cherry00}. There have been various
efforts to solve such equations accurately with as few points as possible. Many efforts for developing
such methods arise from the cardiac physiology community, where fast and reliable computations are needed to
explore phenomena such as cardiac fibrillation \cite{fe02}. In this area, there is a considerable
amount of methods that use non uniform grids in order to solve numerically equations with different spatio-temporal
solutions. For example, Cherry et. al. \cite{Cherry00} and Trangenstein et. al. \cite{tran04} use adaptive mesh
refinement methods. Krause et. al. \cite{krause2015} present an
adaptive method based on locally structured meshes. Also, finite element methods have been proposed for general
reaction diffusion equations \cite{Hu2012} and for cardiac dynamics in particular
\cite{arthurs2012,Hoermann2018,chamakuri2019}. Finally, spectral and pseudospectral methods have been also used for solving reaction-diffusion equations \cite{e83,bueno2014,jones96,bb93,zhang2020} in general,
and for cardiac dynamics in particular \cite{bueno2006,Rod2018_1,Rod2018_2}. In \cite{Rod2018_1,Rod2018_2} Chebyshev pseudospectral methods
are developed using a fixed non uniform mesh to describe the propagation of waves. \\

\noindent In the present work, we follow the idea in \cite{os09,Rod2018_1,Rod2018_2}, and propose a non-overlapping, non-conforming multidomain spectral
method for the reaction diffusion equations. One of the main differences between the method developed in this work and
\cite{os09,Rod2018_1,Rod2018_2} is that subdomains overlap in one single point for the one dimensional case while the previous works use two overlapping points. Under this approach, it becomes possible to assign different number of collocation points to each subdomain. Based on
this premise we can assign the larger number of points in regions where fast transitions occur than regions that has not rapid changes in space. This approach is not a new method in spectral community. However, to the best of authors knowledge such approach has not been applied to the traveling wave solutions of the reaction-diffusion equations. Rather there are various previous works using discontinuous Galerkin methods for the solution of the reaction-diffusion equations \cite{zhu2009,Liu2009,zhang2014}. Further, as the subdomains overlap at only one point, the given PDEs can be solved separately in each subdomain, which makes the method parallelizable. Then the interface and boundary conditions are applied in a global manner to patch solutions across subdomains. The implementation of the interface and boundary conditions results in a linear system, which can also be solved in an efficient way. In this paper, we show that the proposed approach is stable. As for the grid adaptivity, we use the uniform intervals of subdomains and change the number of points inside each interval, with time, depending on the smoothness of the solution within the subdomain. For the temporal variation of the number of grid points, we use the simple switching algorithm. That is, the subdomain increases the total number of grid points when the solution becomes stiff to a certain degree with its few adjacent subdomains adopting the same number of grid points as a buffer zone to guarantee the consistency of the solution for stability. We first construct the proposed method for the one-dimensional problem. The extension to the two-dimensional problem is straightforward, dimension-by-dimension. We present numerical solutions for both the one- and two-dimensional problems. The numerical solutions presented in this paper shows the efficacy of the proposed method.
This paper is organized as follows. In Section \ref{Section_mathmodels} we introduce the mathematical models of reaction-diffusion equations. In Section \ref{numericalmethods} we present the proposed adaptive multidomain Chebyshev spectral method to properly capture the stiff traveling wave solutions. In the same section, we prove the stability of the proposed method. In Section \ref{Sec:switch} the switching algorithm is explained. In Section \ref{Sec_numres}, we show numerical results including convergence and grid adaptivity for both one- and two-dimensional problems. Finally, we close the paper with a Section of Discussions and Conclusions.

\section{Mathematical models}\label{Section_mathmodels}

\noindent In order to test our proposed method we focus on equations that accept traveling waves. The first model belongs to the family of the Fitzhugh-Nagumo models. These types of equations are a generic representation of excitable media and are used to study propagation of action potentials in cardiac cells. The model under study was proposed by Barkley \cite{db91} and is given by the following set of equations,

\begin{equation}\label{FHNeq}
\begin{array}{lcl}
u_t & = & \delta \nabla^{2}u+\frac{1}{\epsilon}u(1-u)\left(u-\frac{v+b}{a}\right)\\
v_t & = & u-bv,
\end{array}
\end{equation}
where $u: \mathbb{R}\times \mathbb{R}^+\rightarrow \mathbb{R}$ represents the voltage across the cell membrane and $v: \mathbb{R}\times \mathbb{R}^+\rightarrow \mathbb{R}$ represents a gating variable to control the voltage \cite{keener_book}. $\epsilon \in \mathbb{R}^+$ is the time scale separation parameter between $u$ and $v$, given by the fast inward sodium current, and $a\in \mathbb{R}^+$ and $b\in \mathbb{R}^+$ are the parameters related to the action potential duration, threshold and time recovery of the cell. A detailed explanation of the dynamics for Eq. (\ref{FHNeq}) can be found in \cite{db91}.

\noindent Equation (\ref{FHNeq}) is a simplified version of more complex models in heart dynamics as those shown in \cite{fenton2008}, and preserves some important properties, such as pulse propagation and more than one scale spatiotemporal changes in the solution. In cardiac cells, the presence of pulses indicate that contracting mechanisms activate, and therefore, failure in the correct propagation of pulses are responsible for the presence of certain type of arrhythmias, particularly fibrillation \cite{fe02}. Therefore, these models are useful to
understand mechanisms of arrhythmias origin and evolution, as well as its control.\\

\noindent The second set of equations is known as the Gray-Scott model \cite{grayscott1984}, and is a theoretical model of an autocatalytic chemical reaction. The equations are given by

\begin{equation}\label{GSeq}
\begin{array}{lcl}
u_t & = & \delta \nabla^{2}u+\frac{1-u}{\tau_{r}}-uv^{2},  \\
v_t & = & \nabla^{2}v+\frac{v_{0}-v}{\tau_{r}}+uv^{2}-\kappa_{2}v,
\end{array}
\end{equation}
where $u: \mathbb{R}\times \mathbb{R}^+\rightarrow \mathbb{R}$ and $v: \mathbb{R}\times \mathbb{R}^+\rightarrow \mathbb{R}$ represent the concentration of reactant $u$ and autocatalyst $v$ at time $t$, respectively. $\delta$ is the coefficient ratio of reactants $u$ and $v$, $\tau_r$ is the ratio of the reactor volume and the total volumetric flow rate, $v_0$ is related to the inflow concentration of the autocatalyst related to the inflow concentration of the reactant, and $\kappa_{2}$ is the decaying rate of the autocatalyst. A detailed study of its local dynamics can be found in \cite{pet94}.\\
\noindent The Gray-Scott model has been of interest due mainly to (i) the understanding of the great variety of patterns that can be found when some of its parameters are changed \cite{pearson93}; (ii) the richness of its dynamical system behavior \cite{Alhumaizibook} and (iii) its help to elucidate the underlying processes in some auto-catalytic chemical reactions \cite{Lee94}.

\noindent Propagating pulses in the one dimensional case, can be observed \cite{pet94}, in which wave reflection and wave splitting phenomena are of interest. In two dimension, the presence of curvature in the propagating fronts leads to have pattern formation via pulse propagation \cite{Muratov2001}. \\

\noindent In general, these two models are excellent examples of pulse propagation phenomena. Although both equations lie on the family of excitable systems \cite{os09,pet94}, the traveling pulse solutions have different qualitative behaviour. Propagating pulses for the Fitzhugh-Nagumo equations when solved with impermeable conditions, lead to annihilation of the pulses at the boundary. Contrary to this, the Gray-Scott model pulses are reflected at the boundary. Similarly, Fitzhugh-Nagumo propagating pulses annihilate when they collide with each other, whereas for the Gray-Scott pulses reflection is again observed.

\noindent Finally, it has been shown that
Fitzhugh-Nagumo pulses are exponentially stable under small perturbations \cite{Yanagida1985}. However, in the case of the Gray-Scott equations, even it has been shown analitically and numerically
the existence of traveling solutions \cite{Manukian2015,pet94}, it is an open question the understanding of the stability of such pulses.

\section{Numerical methods}\label{numericalmethods}

\noindent Many RD equations like Fitzhugh-Nagumo (FHN) and Gray-Scott (GS) yield traveling wave solutions (fronts or pulses) that evolve in different spatial and temporal scales \cite{pet94,Tyson1988}. Such solutions have an exponential behavior in their state transitions and therefore, are differentiable. However, if we focus on a one dimensional propagation, with the introduction of a computational mesh and taking the width of the mesh larger than the width of the transition of the pulse, it will imply that the fast transitions behave as jump discontinuities. In order to recover the smooth behavior, it is mandatory to include more points in the regions with fast transitions. One way of doing this is to refine all the domain equally. However, this implies that we might be refining the mesh in locations where it is not needed, i.e. where there are no fast spatial transitions of the solution. A better strategy is to refine meshes only at places where the solutions are highly non-smooth. In this paper, we apply a similar approach, i.e. inhomogeneous multidomain spectral method for solving the reaction-diffusion equations. As we will explain in the following sections, the multidomain spectral method, particularly for hyperbolic type PDEs, has been developed thoroughly. For the multidomain spectral method, both the overlapping and non-overlapping approaches have been adopted for various problems. However,
to the authors' best knowledge, no non-overlapping multidomain spectral method has been applied to the reaction- diffusion equations while there are various research with the discontinuous
Galerkin methods. Non-overlapping multidomain spectral method has individual subdomains share only the domain interface(s) where the continuity of the solution and its derivative(s) is not required. In this work, we consider the non-overlapping multidomain spectral method with the continuity of the solution and its first derivative at the domain interfaces to guarantee the stability of the numerical solutions of the reaction-diffusion equations. Although the non-overlapping multidomain spectral method is not new, its application to the reaction-diffusion equations is new.

\noindent  Further, our scheme will use an adaptive mesh, such that the refined mesh will evolve together with the fast transitions. Once a fast transition leaves a particular location, the mesh will become coarse at this particular location.

\noindent In order to describe our numerical solution method, we first focus on solving Eq. (1) numerically in a one dimensional truncated domain given by $\Omega=[x_L,x_R] \subset \mathbb{R}$.
We consider the system of differential equations for $u$ and $v$ given by
\begin{equation}\label{equation2solve}
\begin{array}{lcl}
\frac{\partial u}{\partial t} &=&  \delta_u\nabla^2u + f(u,v) \\
\frac{\partial v}{\partial t} &=&  \delta_v\nabla^2u + g(u,v),
\end{array}
\end{equation}
where $\delta_u \in \mathbb{R}^+$  and $\delta_v \in \mathbb{R}^+$ are the diffusion coefficients for $u$ and $v$, respectively, and $f$ and $g$ are the local kinetics. The initial condition is given by
\begin{equation}\label{ic_num}
\left\{
\begin{array}{lcl}
u(x,0)&=&u_0(x)\\
v(x,0)&=&v_0(x)
\end{array}
\right. \qquad \qquad x \in \Omega
\end{equation}
and no flux boundary conditions are employed, such that
$$
\left.\frac{\partial u}{\partial x}\right|_{x \in \partial \Omega}=0
\qquad
\textrm{and}
\qquad
\left.\frac{\partial v}{\partial x}\right|_{x \in \partial \Omega}=0.
$$
\subsection{Chebyshev Pseudospectral Method}\label{Section_pseudo_multi}

\noindent In this section we briefly explain the basis of the method we will use to develop our adaptive method and more detailed description can be found in \cite{os05,os09}. The main idea is to consider the Chebyshev polynomials of degree at most $N$, orthogonal in the interval $[-1,1]$ respect to the weight function $\textrm{w}(z)=(1-z^2)^{-1/2}$, for the approximation of the unknown function. By using the Chebyshev-Gauss-Lobatto quadrature points $z_{i} = -\cos\left(\frac{\pi
i}{N}\right)$, ($i=\overline{0,N}$) and the weights $\omega_i = \frac{\pi}{N}$ for all $i$ except  $\omega_0 = \omega_N =\frac{\pi}{2N}$, we have that

\begin{equation}\label{lobato}
\int_{-1}^1 \textrm{w}(z)f(z)dz \simeq \sum_{i=0}^{N}w_{i}f(z_i)
\end{equation}
 where $N+1$ is the total number of quadrature points used.
Since any piecewise continuous function, $f \in L_{\textrm{w}}^2[-1,1]$ can
be expanded in a Chebyshev polynomial series that is convergent in
the mean of the $L_{\textrm{w}}^2$ norm, we have that for, $f(z) \in L^2_w[-1,1]$
\begin{equation}\label{expand}
f(z)\approx f_N(z)=\sum_{k=0}^Na_kT_k(z) , \qquad a_k=\frac{2}{c_k\pi}\int_{-1}^{1} \textrm{w}(z)f(z)T_{k}(z)dz
\end{equation}
where $T_k(z)$ is the Chebyshev polynomial of degree $k$, $a_k$ are the expansion coefficients and $c_k = \pi$ for $k = 0, N$ and $c_k =\pi/2$ otherwise. With Eq. (\ref{lobato}) and the expression for $a_k$ we obtain the interpolation of $f(z)$
\begin{equation}
f_N(z)  =\sum_{j=0}^{N}I_j(z)f(z_j)
\end{equation}
where the interpolating polynomials, $I_j(z)$, are given by
\begin{equation}\label{interp}
I_j(z)=\frac{2\nu_j}{N} \sum_{k=0}^{N} \nu_kT_k(z_j)T_k(z)\
\end{equation}
where $\nu_0=\nu_N=1/2$ with $\nu_k=1$ if $k \neq 0,N$ and
$I_j(z_i)=\delta_{ij}$ is the cardinal condition such that
$f_N(z_i)=f(z_i)$. The $n$th derivative of $f(z)$ at
the quadrature points are then given approximately by
\begin{equation}\label{nder}
f^{(n)}(z_k) \simeq f_N^{(n)}(z_k) =\sum_{j=0}^{N}I_{j}^{(n)}(z_k)f(z_j).
\end{equation}
If we let $\mathbf{f}= (f(z_0), f(z_1), \cdots, f(z_N))^T$,  Eq.
(\ref{nder}) can be rewritten as
\begin{equation}
\mathbf{f^{(n)}_N}=\mathbf{D^{(n)}\cdot f}
\end{equation}
where $\mathbf{f^{(n)}_N} \in \mathbb{R}^{N+1}$ is the approximation vector of the $n$th derivative of $f(x)$, and $\mathbf{D^{(n)}} \in \mathbb{R}^{(N+1)\times (N+1)}$ is the $n$th derivative operator with each $jk$th element given explicitly by
\begin{equation}
\label{deriv}
D^{(n)}_{jk} :=I_j^{(n)}(z_k) =  \left.\frac{d^{n}I_j(z)}{dz^n}\right|_{z=z_k}.
\end{equation}
Based on these definitions, Eq. (\ref{equation2solve}) can be reduced to a set of ODEs as below
\begin{equation}\label{fhn_ode_sys}
\begin{array}{lcl}
\frac{dU_{i}}{dt}& = & A \delta_u \sum_{k=0}^{N}D_{ik}^{(2)}U_{k} + f(U_{i},V_{i})\\
\frac{dV_{i}}{dt}& = & A \delta_v \sum_{k=0}^{N}D_{ik}^{(2)}V_{k} + g(U_{i},V_{i})\\
\end{array}
\end{equation}
where $U_{i}$ and $V_i$ are the  approximations of $u(x_{i},t)$ and $v(x_i,t)$, respectively and $A=\frac{4}{(x_R-x_L)^2}$ appears as a consequence of the
linear transformations of the physical domain $[x_L,x_R]$ to the reference domain $[-1,1]$. No-flux boundary conditions are implemented in a similar way
\begin{equation}\label{nbcx1r}
\begin{array}{lcl}
\sum_{j=0}^{N} {D^{(1)}}_{0j}U_{j}&=&0, \quad x = x_L \\
\sum_{j=0}^{N} {D^{(1)}}_{Nj}U_{j}&=&0, \quad x = x_R
\end{array}
\end{equation}
for $u$, and similarly for $v$.\\

\subsection{Non-overlapping multidomain spectral method}

In order to apply the Chebyshev pseudospectral method above in a multidomain setting, several multidomain methods have been developed, mainly the overlapping multidomain methods for the reaction-diffusion eqeuations, including the Chebyshev multidomain (CMD) method \cite{os05}. In \cite{os05} the interval $[x_L,x_R]$ is divided into $M$ overlapping
subintervals, $I_{\mu}=[x_{0}^{\mu}, x_{N}^{\mu}]$, with  $\mu=1,\cdots, M$, and  all the subintervals have the same length and the same number of collocation points. For each
subinterval, the procedure described in Eqs. (\ref{expand}) to (\ref{nder}) is applied with the resulting system of
coupled ODEs given by Eq. (\ref{fhn_ode_sys}) with $A=\frac{4}{\left(x_{N}^{\mu}-x_{0}^{\mu}\right)^2}$. The first and second derivative matrices $\mathbf{D}^{(1)}$ and  $\mathbf{D^{(2)}}$ in Eqs.
 (\ref{fhn_ode_sys}) and (\ref{nbcx1r}), respectively  become block diagonal matrices as shown in \cite{os05}.

\noindent  One disadvantage of the CMD method \cite{os05} is that all the subdomains or subintervals need to have the same number of collocation points. It is possible to consider the overlapping multidomain method with each subdomain possibly having different number of quadrature points. In that case, extra interpolation/extrapolation procedures are required to find the solution values off the quadrature points. Although it is doable, the computational complexity becomes large, particularly when two- or three-dimensional problems are considered. Instead of using the overlapping multidomain method, we consider the non-overlapping multidomain method where any intersection of the adjacent subdomains is the set of one point at the domain interface for one-dimensional problem, of edges for two-dimensional problem and so forth.
Figure \ref{md2}, shows the schematic illustration of the non-overlapping multidomain for one-dimension where we have $M$ subdomains, $I_i, i = 1, \cdots, M$ with each subdomain $I_k$ of $N_k + 1$ quadrature points. Therefore, it follows that we can have subdomains with more points than other subdomains in places where the fast transitions in solution occur.


\begin{figure}[h]
\begin{center}
\begin{tikzpicture}
\draw [color = black!99] (1,0) -- (4,0);
\draw [color = black!99] (4,0) -- (7,0);
\draw [densely dashed, color = black!99] (7,0) -- (10,0);
\draw [color = black!99] (10,0) -- (13,0);

\draw [color = black!99] (1,-0.5) -- (1,0.5);
\draw [color = black!99] (4,-0.5) -- (4,0.5);
\draw [color = black!99] (7,-0.5) -- (7,0.5);
\draw [color = black!99] (10,-0.5) -- (10,0.5);
\draw [color = black!99] (13,-0.5) -- (13,0.5);

\node at (1,-1) (nodeXi) {$x^1_0$};
\node at (3.9,-1) (nodeXi) {$x^1_{N_1} = x^2_0$};
\node at (6.9,-1) (nodeXi) {$x^2_{N_2} = x^3_0$};
\node at (9.9,-1) (nodeXi) {$x^{M-1}_{N_{M-1}} = x^M_0$};
\node at (12.9,-1) (nodeXi) {$x^M_{N_M}$};

\node at (2.5,0.4) (nodeXi) {$\mbox{I}_1$};
\node at (5.5,0.4) (nodeXi) {$\mbox{I}_2$};
\node at (8.5,0.4) (nodeXi) {$\cdots\cdots$};
\node at (11.5,0.4) (nodeXi) {$\mbox{I}_M$};

\node at (7,2) (nodeXi) {$\mbox{Subdomains:}\quad \mbox{I}_{i}, i = 1, \cdots, M$};

\draw[->] (5,1.7) -- (2.7,0.8);
\draw[->] (7,1.7) -- (11,0.7);
\draw[->] (6,1.7) -- (5.5,0.7);
\end{tikzpicture}
\end{center}
	\caption{Non-overlapping multidomain in one dimension. Each subdomain intersects with its adjacent subdomain only at the domain interface.}
	\label{md2}
\end{figure}
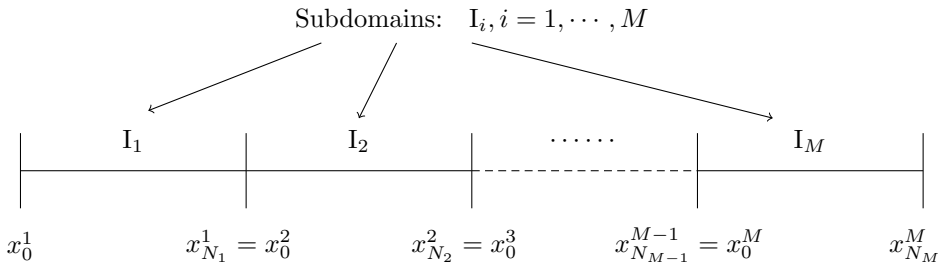

\subsubsection{Calculation of the diffusion term}

Since each subdomain has different number of quadrature points, we need to define the second derivative operator for each subdomain. As each subdomain meets its adjacent subdomain at the interface it follows that the second derivative at the interface is calculated twice. At the end of the calculation, when we implement the boundary and interface conditions, the values of the approximated solutions at the domain  interface become equal. The use of this approach leads to a parallelizable algorithm which makes its computation more efficient. Let $\mathbf{D}^{1,k}$ and $\mathbf{D}^{2,k}$ be the first and second order derivatives defined in $k$th subdomain with the total $N_k + 1$ quadrature points, $x^k_0, \cdots,
x^k_{N^k}$ in it, i.e.
$$
\mathbf{D}^{1,k} = \frac{2}{{x^k_{N_k}}-x^k_0} {\bf D}^{(1)} \qquad \textrm{and}\qquad
\mathbf{D}^{2,k} = \frac{4}{(x^k_{N_k}-x^k_0)^2} {\bf D}^{(2)}
$$
where $\mathbf{D}^{(1)}$ and $\mathbf{D}^{(2)}$ are defined in Eq. \ref{deriv}. Here note that $\mathbf{D}^{(1)}$ and $\mathbf{D}^{(2)}$ are defined with $N_k$ corresponding to $I_k$ subdomain whose interval is $[x_0^k, x^k_{N_k}]$.
In this case, the global second order derivative matrix $\mathbf{D}^{2}$ becomes
\begin{equation}\label{matrixD2}
\mathbf{D}^{2}=\left(
\begin{array}{llllll}
\mathbf{D}^{2,1}&0&0&\dots&0\\
0&\mathbf{D}^{2,2}&0&&0\\
\vdots  & &\ddots&&\vdots\\
0&0&\dots&\mathbf{D}^{2,M-1}&0\\
0&0&\dots&0&\mathbf{D}^{2,M}\\
\end{array}\right)
\end{equation}
which is applied to the global numerical solution $U$,
\begin{equation}\label{vectorUn}
U=\left(U_{0}^1,U_{1}^1,\dots,U_{N_1}^1,U_{0}^2,\dots,U_{N_{M-1}}^{M-1},U_{0}^M,\dots U_{N_M}^M \right)^T
\end{equation}
where $U_{i}^k\approx u(x_i^k)$, and because $x_{N_k}^k=x_0^{k+1}$, we take $U_{N_k}^k=U_{0}^{k+1}$ $k=1,\cdots, M-1$. The size of $\mathbf{D}^{2}$ is $\mathbf{N}\times\mathbf{N}$, where $\mathbf{N}=N_1+N_2+...+N_M+M$.

\subsection{Time integration}

\noindent With the definition of $\mathbf{D}^{2}$ and $U$ in Eqs. (\ref{matrixD2}) and (\ref{vectorUn}), respectively, Eq. \ref{fhn_ode_sys}, becomes an equation defined over the grid points
$$
\mathbf{x}=\left(x^1_0, \cdots, x^1_{N_1},x^2_0, \cdots, x^2_{N_2},\cdots x^M_0, \cdots, x^M_{N_M}\right),
$$
and is given by
\begin{equation}\label{fhn_ode_sys2}
\begin{array}{lcl}
\frac{dU}{dt}& = & \delta_u \mathbf{D}^{2}U + f(U,V)\\
\frac{dV}{dt}& = & \delta_v \mathbf{D}^{2}V + g(U,V).\\
\end{array}
\end{equation}

\noindent In order to solve such equations, we first describe the time integration part of our equations. For this we apply the operator splitting approach as done in \cite{qu99}, i.e. given an approximated solution at time $t^n$, $U^{k}_j(t^n)\approx u(x^k_j,t^{n})$, in order to advance a $\Delta t$ time
step, we
\begin{itemize}
    \item Solve initially the diffusion equation with a backward Euler method with time step $\Delta t/2$ and initial condition $U(t^n)$ and name this solution $U^{*}$, i.e.
\begin{equation}\label{diff_eq}
U^{*}=U(t^n)+\frac{\Delta t}{2}\delta_u \mathbf{D}^{2}U^{*} \qquad \textrm{and}
\qquad V^{*}=V(t^n)+\frac{\Delta t}{2}\delta_v \mathbf{D}^{2}V^{*}
\end{equation}
or
\begin{equation}\label{diff_eq2}
\left(I-\delta_u\frac{\Delta t}{2} \mathbf{D}^{2}\right)U^{*}=U(t^n) \qquad
\textrm{and} \qquad \left(I-\delta_v\frac{\Delta t}{2} \mathbf{D}^{2}\right)V^{*}=V(t^n)
\end{equation}
where $U(t^n)$ and $V(t^n)$ are given by,
\begin{equation}\label{vectorUntn}
U(t^n)=\left(U_{0}^1(t^n),U_{1}^1(t^n),\dots,U_{j}^k(t^n),\dots, U_{N_M}^M(t^n) \right)^T
\end{equation}
and
\begin{equation}\label{vectorVntn}
V(t^n)=\left(V_{0}^1(t^n),V_{1}^1(t^n),\dots,V_{j}^k(t^n),\dots, V_{N_M}^M(t^n) \right)^T.
\end{equation}

\item Solve the reaction part with a $\Delta t$ time step and initial condition $U^{*}, V^{*}$, by Forward Euler's method, i.e.
\begin{equation}
    \begin{array}{lcl}
    U^{\bigtriangleup}&=&U^{*}+\Delta tf(U^*,V^*)\\
    V^{\bigtriangleup}&=&V^{*}+\Delta tg(U^*,V^*).
    \end{array}
\end{equation}
\item Finally, we solve the diffusive part with time step $\Delta t/2$ and initial condition $U(t^{\bigtriangleup})$ and $V(t^{\bigtriangleup})$ to obtain $U(t^{n+1})$ and $V(t^{n+1})$
as
\begin{equation}\label{diff_eq2f}
\left(I-\delta_u\frac{\Delta t}{2} \mathbf{D}^{2}\right)U(t^{n+1})=U(t^{\bigtriangleup}) \quad
\textrm{and} \quad \left(I-\delta_v\frac{\Delta t}{2} \mathbf{D}^{2}\right)V(t^{n+1})=V(t^{\bigtriangleup}).
\end{equation}
\end{itemize}
Boundary and interface conditions are applied after an advance in time in the diffusion part takes place.

\noindent Therefore, we solve Eq. (\ref{fhn_ode_sys2}) with different time steps. In regions where there is no mesh refinement, we take a large time step $\Delta t$. However, when the solutions are in the refined mesh, we solve the equation for small time step $dt$.
At each $dt$ time step, we apply the boundary and interface conditions. We take $\Delta t=Kdt$ with $K$ a natural number. The reactive part of the equation was also solved with a second order Runge-Kutta method. However, by using simple Euler's integration, the results were practically the same. It is important to mention that in both cases, the refined and the coarse regions, the same time step $\Delta t$ was taken for the reactive part.
\subsubsection{Boundary and interface conditions}\label{subSec:BIC}

\noindent Additional to the initial and boundary conditions, we impose the interface conditions as the continuity of the approximation and its first derivative at the interface as
\begin{equation}\label{sc}
  U^k_{N_k}=U_{0}^{k+1}
 \end{equation}
 and
  \begin{equation} \label{sc2}
     \left.\frac{\partial I_Nu}{\partial x}\right|_{x^k_{N_k}}=
\left.\frac{\partial I_Nu}{\partial x}\right|_{x^{k+1}_{0}}
\end{equation}
for $k=1,...,M-1$. Here note that ensuring the continuity of the approximation only is not enough for stability as we will show later. Both continuities in the approximation and its derivative are essential for stability. Here we also note that the continuity is a strong condition. For the approach with  discontinuous Galerkin methods, continuity is only weakly enforced, resulting in discontinuous solutions across the subdomains.

\noindent We apply the boundary conditions simultaneously together with the interface conditions
\begin{equation}\label{bc}
  \left.\frac{\partial I_Nu}{\partial x}\right|_{x^1_{0}}=0
\qquad \textrm{and} \qquad
  \left.\frac{\partial I_Nu}{\partial x}\right|_{x^M_{N_M}}=0.
\end{equation}
The condition of Eq. (\ref{sc2}) at the interface between $k$th and $(k+1)$th subdomains is given by the following
$$
\sum_{j=0}^{N_k}D^{1,k}_{N_k,j}U_j^{k}=\sum_{j=0}^{N_{k+1}}D^{1,k+1}_{0,j}U_j^{k+1} \qquad \textrm{for} \qquad k=1, \cdots, M-1.
$$
If we let $x_*^k=x_{N_k}^k=x_{0}^{k+1}$, the condition in Eq. (\ref{sc}) leads us to define  $U_*^k:=U_{N_k}^k=U_{0}^{k+1}$. From here, we obtain a system of equations for the approximate solution at the interfaces, $U_*^1$, $U_*^2$, $...$, $U_*^{M-1}$. Once these interface solutions are determined, the continuity conditions at the interfaces can be written as

$$
\begin{array}{ll}
D^{1,k}_{N_k,0}U_*^{k-1}+D^{1,k}_{N_k,N_k}U_*^{k}-D^{1,k+1}_{0,0}U_*^k-D^{1,k+1}_{0,N_{k+1}}U_*^{k+1}&=\\
-\sum_{j=1}^{N_k-1}D^{1,k}_{N_k,j}U_j^{k}+\sum_{j=1}^{N_{k+1}-1}D^{1,k+1}_{0,j}U_j^{k+1}&
\end{array}
$$
for the $k$th and $(k+1)$th subdomains, and equivalently
$$
\begin{array}{ll}
D^{1,k}_{N_k,0}U_*^{k-1}+(D^{1,k}_{N_k,N_k}- D^{1,k+1}_{0,0})U_*^k-D^{1,k+1}_{0,N_{k+1}}U_*^{k+1}&=\\
-\sum_{j=1}^{N_k-1}D^{1,k}_{N_k,j}U_j^{k}+\sum_{j=1}^{N_{k+1}-1}D^{1,k+1}_{0,j}U_j^{k+1}&
\end{array}
$$
for $k=1,\cdots,M-1$.
Now, from the boundary condition Eqs. (\ref{bc}), we obtain the equations
$$
D^{1,1}_{0,0}U_0^{1}+D^{1,1}_{0,N_1}U_{*}^{1}=-\sum_{j=1}^{N_1-1}D^{1,1}_{0,j}U_j^{1}
$$
and
$$
D^{1,M}_{N_{M},0}U_{*}^{M-1}+D^{1,M}_{N_{M},N_{M}}U_{N_M}^M=-\sum_{j=1}^{N_{M}-1}D^{1,M}_{N_{M},j}U_j^M.
$$
Given the interior solutions, $U^k_j$, found, the boundary or interface solutions that satisfy the boundary and interface conditions can be obtained through the linear system $AU_{I} = b$, where
\begin{equation}\label{vectorBCCS}
 \scriptsize
  U_{I}=\left(
 \begin{array}{c}
  U_0^1\\
  U_*^1\\
  U_*^2\\
\vdots\\
U_*^{M-1}\\
U_{N_{M}}^{M}\\
  \end{array}
 \right).
\end{equation}
The coefficient matrix $A$ is
\begin{equation}\label{matrixBCSC}
\scriptsize
 A=\left(
\begin{array}{cccccc}
D^{1,1}_{0,0}&D^{1,1}_{0,N_1}&0&&\dots&0\\
D^{1,1}_{N_1,0}&d_1&-D^{1,2}_{0,N_2}&0&\dots&0\\
0&D^{1,2}_{N_2,0}&d_2&-D^{1,3}_{0,N_3}&\dots&0\\
\vdots&\ddots&\ddots&\ddots&&\vdots\\
0&0&D^{1,M-2}_{N_{M-2},0}&d_{M-2}&-D^{1,M-1}_{0,N_{M-1}}&0\\
0&&0&D^{1,M-1}_{N_{M-1},0}&d_{M-1}&-D^{1,M}_{0,N_{M}}\\
0&&\dots&0&-D^{1,M}_{N_{M},0}&-D^{1,M}_{N_{M},N_{M}}\\
\end{array}\right)
\end{equation}
where, $d_i=D^{1,i}_{N_i,N_i}-D^{1,i+1}_{0,0}$ and the interior solution vector, $b$ is
\begin{equation}\label{vectorbBCCS}
 \scriptsize
 b=\left(
 \begin{array}{c}
  -\sum_{j=1}^{N_1-1}D^{1,1}_{0,j}U_j^{1}\\
  -\sum_{j=1}^{N_1-1}D^{1,1}_{N_1,j}U_j^{1}+\sum_{j=1}^{N_{2}-1}D^{1,2}_{0,j}U_j^{2}\\
    -\sum_{j=1}^{N_2-1}D^{1,2}_{N_2,j}U_j^{2}+\sum_{j=1}^{N_{3}-1}D^{1,3}_{0,j}U_j^{3}\\
\vdots\\
  -\sum_{j=1}^{N_{M-1}-1}D^{1,M-1}_{N_{M-1},j}U_j^{M-1}+\sum_{j=1}^{N_c^{M}-1}D^{1,M}_{0,j}U_j^{M}\\
  \sum_{j=1}^{N_{M}-1}D^{1,M}_{N_{M},j}U_j^{M}\\
 \end{array}
 \right).
\end{equation}
The matrix $A$, is a tridiagonal matrix. It can be easily shown that $A \ne 0$. Note that the system can be solved efficiently with Thomas algorithm \cite{Zhang_bookchapter}.
\subsubsection{Algorithm to build $A$ and $b$}

\noindent Given the number of subdomains $M$ and the set of the number of quadrature points in each subdomain $\mathbf{N}=(N_1,N_2,..,N_M)$, $A$ and $b$ are computed algorithmically as follows: $A \in \mathbb{R}^{(M+1)\times(M+1)}$ is a tridiagonal matrix such that $A(1,1)=D^{1,1}_{0,0}$, $A(1,2)=D^{1,1}_{0,N_1}$, $A(M+1,M)=-D^{1,M}_{N_M,0}$ and
$A(M+1,M+1)=-D^{1,M}_{N_M,N_M}$. Now, for $s=2$ to $M$ we have that
\begin{equation}
\begin{array}{lll}
A(s,s-1)&=&D^{1,s-1}_{N_{s-1},0}\\ A(s,s)&=&D^{1,s-1}_{N_{s-1},N_{s-1}}-D^{1,s}_{0,0}\\
A(s,s+1)&=&-D^{1,s}_{0,N_s}.
\end{array}
\end{equation}
In the same way, $b \in \mathbb{R}^{(M+1)}$ is computed as: $b(1)=-\sum_{j=1}^{N_1-1}D^{1,1}_{0,j}U_j^{1}$, $b(M+1)=\sum_{j=1}^{N_{M}-1}D^{1,M}_{N_{M},j}U_j^{M}$ and for $s=2$ to $M$
\begin{equation}
\begin{array}{lll}
b(s)&=&-\sum_{j=1}^{N_{s-1}-1}D^{1,s-1}_{N_{s-1},j}U_j^{s-1}+\sum_{j=1}^{N_{s}-1}D^{1,s}_{0,j}U_j^{s}.\\ \end{array}
\end{equation}
\subsection{Numerical stability of the method}\label{Sec:Stability}

\noindent In this section we discuss the stability of the method.  Particularly, we focus on the solution of the diffusive part of the equation. In this case, the operator splitting scheme is not required and therefore, we focus on  scheme (\ref{diff_eq}) with boundary and interface conditions, i.e.
\begin{equation}\label{diff_eq_diff}
\left(I-\delta_u\Delta t \mathbf{D}^{2}\right)U(t^{n+1})=U(t^n) \quad
\textrm{and} \quad \left(I-\delta_v\Delta t \mathbf{D}^{2}\right)V(t^{n+1})=V(t^n).
\end{equation}
Scheme (\ref{diff_eq_diff}) provides directly, the value of $U(t^{n+1})$ and $V(t^{n+1})$. In this situation, a backward Euler method is implemented for a time step, followed by the implementation of the boundary and interface conditions. From here, we can think on the recurrence iteration,
\begin{equation}\label{method_discrete}
U^{n+1}=S_2U^*=S_2(S_1U^n)=(S_2S_1)U^n
\end{equation}
where $S_1=(I-\delta_u\Delta t\mathbf{D}^{2})^{-1}$, $I$ is the identity matrix, $\mathbf{D}^{2}$ is given in expression (\ref{matrixD2}) and $U^n$ is as given in (\ref{vectorUn}). $S_2$ is an operator that applies the boundary and interface conditions to $U^*=S_1U^n$. A similar equation is taken for the $V$ variable.

\noindent The $S_2$ operator can be obtained from the linear system $AU_I=b$ given by expressions (\ref{matrixBCSC}) and (\ref{vectorbBCCS}). First, observe that $U_I$ in expression (\ref{vectorBCCS}), is $U^{n+1}$ at the boundary and interface points. As we look for an expression of the form $U^{n+1}=S_2U^*$ in which all the points
of the mesh are considered, we can extend $A$ and $b$ to all points in the mesh. In fact, $b$, extended to all the domain points is given by $\hat{b}=BU^*$, where

\begin{equation}\label{Bbmatrix}
B=\left(
\begin{array}{cccccccccc}
0&-\mathbf{D}^{1,1}_{0,1}&\dots&-\mathbf{D}^{1,1}_{0,N_1-1}&0&0&0&\dots&0&0\\
0&1&0&0&0&0&\dots&0&0&0\\
0&0&\ddots&0&0&0&\dots&0&0&0\\
0&0&0&1&0&0&0&\dots&0&0\\
0&-\mathbf{D}^{1,1}_{N_1,1}&\dots&-\mathbf{D}^{1,1}_{N_1,N_1-1} &0&0&\mathbf{D}^{1,2}_{0,1} &\dots&   \mathbf{D}^{1,2}_{0,N_2-1}&0\\
0&-\mathbf{D}^{1,1}_{N_1,1}&\dots&-\mathbf{D}^{1,1}_{N_1,N_1-1} &0&0&\mathbf{D}^{1,2}_{0,1} &\dots&   \mathbf{D}^{1,2}_{0,N_2-1}&0\\
0&0&\dots&0&0&0&1&0&\dots&0\\
0&0&\dots&0&0&0&0&\ddots&0&0 \\
0&0&\dots&0&0&0&\dots&0&1&0 \\
0&0&\dots&0&0&0&\mathbf{D}^{1,2}_{N_2,1}&\dots& \mathbf{D}^{1,2}_{N_2,N_2-1}&0\\
\end{array}\right)
\end{equation}
and
\begin{equation}\label{ustar}
U^*=\left(U^*(x_0^1),U^*(x_{1}^1),\dots, U^*(x_{N_1}^1),U^*(x_0^2), \dots, U^*(x_{N_2}^{2}) \right)^T
\end{equation}
in the case we have two subdomains. Observe that for $U^*$, its values at the interface points $x_{N_1}^1$ and $x_0^2$ are not necessarily equal. It follows that $\hat{b}$ for two subdomains is given by
$$
  \hat{b}=\left(
 \begin{array}{c}
-\sum_{j=1}^{N_c^1-1}  \mathbf{D}^{1,1}_{0,j} U^*(x_j^{1})\\
  U^*(x_1^1)\\
  U^*(x_2^1)\\
\vdots\\
U^*(x_{N_1-1}^1)\\
-\sum_{j=1}^{N_1-1}  \mathbf{D}^{1,1}_{N_1,j}U(x_j^{1})+\sum_{j=1}^{N_2-1}\mathbf{D}^{1,2}_{0,j}U(x_j^{2})\\
\mathbf{D}^{1,1}_{N_1,j}U(x_j^{1})+\sum_{j=1}^{N_c^{2}-1}\mathbf{D}^{1,2}_{0,j}U(x_j^{2})\\
  U^*(x_1^2)\\
  U^*(x_2^2)\\
\vdots\\
U^*(x_{N_2-1}^2)\\
  \sum_{j=1}^{N_{2}-1}\mathbf{D}^{1,2}_{N_2,j}U(x_j^{2})\\
  \end{array}
 \right).
$$

\noindent On the other hand, $A$, which operates over the boundary and the interface points, can be extended to a new operator $\hat{A}$ which acts over all the mesh points and such that leaves invariant all the values located off the boundary and interface points. This
new matrix, is given by
\begin{equation}\label{matrixAhat}
\hat{A}=\left(
\begin{array}{ccccccccccc}
D_0&0&0&0&0&-\frac{(-1)^{N_1}}{2}&0&0&\dots&0&0\\
0  &        1              &0&0&0&0&0&0&\dots&0&0\\
0  &        0              &1&0&0&0&0&0&\dots&0&0\\
\vdots &\vdots      &\vdots&\ddots&0&0&0&0&\dots&0&0\\
0&\dots&0&0&1&0&0&0&\dots&0&0\\
\frac{(-1)^{N_1}}{2}&0&\dots&0&0&D_1&0&0&\dots&0&-\frac{(-1)^{N_2}}{2}\\
\frac{(-1)^{N_1}}{2}&0&\dots&0&0&0&D_1&0&\dots&0&-\frac{(-1)^{N_2}}{2}\\
0&0&0&\dots&0&0&0&1&0&0&0\\
0&0&0&\dots&0&0&0&0&\ddots&0&0\\
0&0&0&\dots&0&0&0&0&0&1&0\\
0&0&0&\dots&0&0&\frac{(-1)^{N_{2}}}{2}&0&0&0&D_2\\
\end{array}\right),
\end{equation}
for two subdomains and where $D_0=\frac{2(N_1)^2+1}{6}$, $D_1=-\frac{(N_{1})^2+(N_{2})^2+1}{3}$ and $D_{2}=-\frac{2(N_{2})^2+1}{6}$.
\noindent Now, it follows that $U^{n+1}$ satisfies $\hat{A}U^{n+1}=\hat{b}$, with
$$
U^{n+1}=\left(U^{n+1}(x_0^1),U^{n+1}(x_{1}^1),\dots, U^{n+1}(x_{N_1}^1),U^{n+1}(x_0^2), \dots, U^{n+1}(x_{N_2}^{2}) \right)^T, $$
and
$$U^{n+1}(x_{N_1}^1)=U^{n+1}(x_0^2).$$
From the previous reasoning, we obtain that the system $AU^{n+1}=b$, defined over the boundary and connecting points can be extended to the system $\hat{A}U^{n+1}=\hat{b}$ defined over the whole domain. Clearly, $U^{n+1}=\hat{A}^{-1}\hat{b}=\hat{A}^{-1}BU^*$, and therefore $S_2=\hat{A}^{-1}B$.

\noindent Observe that $B$ is an idempotent matrix. It follows that $B$ has eigenvalues both $1$ and $0$.
The eigenvalues have  equal geometric and algebraic dimension. Also, it is clear that $\hat{A}^{-1}B$ is an idempotent matrix as well.

\noindent
\begin{lemma}\label{lemma_eigD2}
The eigenvalues $\lambda_i$, $i=1,...,n$ of matrix $S_1=(I-\delta_u\Delta t\mathbf{D}^{2})^{-1}$ with homogeneous Dirichlet boundary conditions are real and belong to the interval $(0,1]$.
\end{lemma}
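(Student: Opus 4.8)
The plan is to reduce the statement about $S_1$ to a spectral statement about the second-derivative operator $\mathbf{D}^{2}$ itself, and then to prove that spectral statement by symmetrizing with respect to the Chebyshev--Gauss--Lobatto (CGL) quadrature weights. First I would diagonalize the spectral map: if $\mu$ is an eigenvalue of $\mathbf{D}^{2}$ (with the Dirichlet data built in) with eigenvector $\phi$, then $\phi$ is also an eigenvector of $S_1$ with
\[
\lambda = \frac{1}{1-\delta_u\,\Delta t\,\mu}.
\]
Since $\delta_u>0$ and $\Delta t>0$, the real rational map $\mu\mapsto (1-\delta_u\Delta t\,\mu)^{-1}$ carries the ray $(-\infty,0]$ bijectively onto $(0,1]$ and stays away from the pole at $\mu=1/(\delta_u\Delta t)>0$. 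Hence the lemma follows once I show that every eigenvalue $\mu$ of $\mathbf{D}^{2}$ is real and satisfies $\mu\le 0$, so that $S_1$ is well defined and its spectrum lies in $(0,1]$.

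The reality and sign of the $\mu$'s is the crux, because the Chebyshev collocation matrix $\mathbf{D}^{(2)}$ is \emph{not} symmetric, so neither property is automatic. The key step is a discrete energy identity. Let $W=\mathrm{diag}(w_0,\dots,w_N)$ be the CGL weights of Eq.~(\ref{lobato}), and for a grid vector $\mathbf{u}$ vanishing at the two boundary nodes let $u_N$ denote its degree-$N$ interpolant, which then vanishes at $x=\pm1$. Because $u_N\,v_N''$ has degree at most $2N-2$, it is integrated \emph{exactly} by the $(N+1)$-point CGL rule, whose exactness reaches degree $2N-1$; therefore
\[
\mathbf{u}^{\top} W\, \mathbf{D}^{(2)}\mathbf{v}
=\sum_{j=0}^{N} w_j\, u_N(x_j)\, v_N''(x_j)
=\int_{-1}^{1} u_N\,v_N''\,dx
=-\int_{-1}^{1} u_N'\,v_N'\,dx,
\]
the boundary term in the integration by parts vanishing by the Dirichlet condition on $u_N$. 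The right-hand side is symmetric in $\mathbf{u},\mathbf{v}$ and non-positive when $\mathbf{u}=\mathbf{v}$, so on the interior subspace $W\mathbf{D}^{(2)}$ is symmetric and negative semidefinite; it is in fact negative definite, since $\int (u_N')^2=0$ forces $u_N'\equiv0$ and hence $u_N\equiv 0$ under the Dirichlet constraint. Conjugating by $W^{1/2}$ then exhibits $\mathbf{D}^{(2)}$ as similar to the symmetric negative-definite matrix $W^{-1/2}\bigl(W\mathbf{D}^{(2)}\bigr)W^{-1/2}$, so its interior eigenvalues are real and strictly negative, while the Dirichlet (boundary) rows, being trivial identity rows, contribute $\mu=0$, i.e.\ the closed endpoint $\lambda=1$.

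Finally I would account for the scaling and the multidomain structure. Each diagonal block $\mathbf{D}^{2,k}=A_k\,\mathbf{D}^{(2)}$ carries the positive factor $A_k=4/(x^k_{N_k}-x^k_0)^2$, which preserves both the reality and the sign of the spectrum; and since $\mathbf{D}^{2}$ in Eq.~(\ref{matrixD2}) is block diagonal, its spectrum is the union of the block spectra, so the bound $\mu\le 0$ transfers to the global operator. Composing with $\lambda=(1-\delta_u\Delta t\,\mu)^{-1}$ yields $\lambda_i\in(0,1]$, as claimed.

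I expect the main obstacle to be exactly the symmetrization step: the non-normality of $\mathbf{D}^{(2)}$ means reality of the spectrum is not free, and the whole argument hinges on the observation that the degree of $u_N\,v_N''$ stays inside the exactness window of the CGL quadrature, so that the discrete weighted bilinear form coincides \emph{exactly} with the self-adjoint, sign-definite continuous Dirichlet energy. One should also verify carefully that the weight matrix $W$ has strictly positive diagonal (true for the CGL weights) so that $W^{1/2}$ is a genuine similarity, and that the interior subspace is invariant once the Dirichlet rows are removed, so that the restricted form is the object whose eigenvalues govern $S_1$.
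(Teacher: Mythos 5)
Your overall reduction is the same as the paper's: both arguments pass through the spectral map $\mu \mapsto (1-\delta_u\Delta t\,\mu)^{-1}$, which carries real $\mu\le 0$ into $(0,1]$, and both attribute the unit eigenvalues to the boundary/interface rows. The paper, however, simply \emph{asserts} that the remaining eigenvalues of $\mathbf{D}^{2}$ are real and negative; you attempt to prove that assertion, and it is precisely there that your argument breaks down.

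The flaw is in the quadrature-exactness step. The Chebyshev--Gauss--Lobatto rule of Eq.~(\ref{lobato}) is exact for polynomials of degree at most $2N-1$ \emph{with respect to the Chebyshev weight} $\mathrm{w}(z)=(1-z^2)^{-1/2}$, not with respect to $dz$. What exactness actually gives is
\[
\mathbf{u}^{\top} W\, \mathbf{D}^{(2)}\mathbf{v}
=\int_{-1}^{1} u_N(z)\, v_N''(z)\,\mathrm{w}(z)\,dz ,
\]
and integrating this by parts (the boundary term still vanishes, since $u_N\mathrm{w}\to 0$ at $\pm1$) produces
\[
-\int_{-1}^{1} u_N'\,v_N'\,\mathrm{w}\,dz
\;-\;\int_{-1}^{1} u_N\,v_N'\,\mathrm{w}'(z)\,dz ,
\]
whose second term, with $\mathrm{w}'(z)=z(1-z^2)^{-3/2}$, is neither symmetric in $(u,v)$ nor sign-definite. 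Consequently $W\mathbf{D}^{(2)}$ restricted to the interior nodes is \emph{not} symmetric, and the similarity argument via $W^{1/2}$ collapses. This is not a repairable technicality: for $N=4$ the interior CGL nodes are $z_1=-1/\sqrt{2}$, $z_2=0$, $z_3=1/\sqrt{2}$ with equal weights $\pi/4$, and a direct computation of the cardinal functions gives $I_2''(z_1)=6$ while $I_1''(z_2)=4$, so the interior block of $\mathbf{D}^{(2)}$ (hence of $W\mathbf{D}^{(2)}$) is visibly nonsymmetric. Your energy identity is the correct and classical argument for \emph{Legendre}--Gauss--Lobatto collocation, where the quadrature is exact for $\int p\,dz$; for Chebyshev collocation the reality and negativity of the Dirichlet spectrum is true but genuinely harder --- it is the Gottlieb--Lustman theorem --- and the paper treats it as a known fact rather than proving it. To close the gap you would either have to invoke that result explicitly, or recast the scheme on Legendre points, where your symmetrization is exact.
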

\begin{proof}
Initially, matrix $\mathbf{D}^{2}$ has four zero eigenvalues and the rest are real and negative. Therefore, $(I-\delta_u\Delta t D^2)$ has four eigenvalues 1 and the rest are real and greater than $1$. Therefore, it follows that four eigenvalues of $S_1$ are $1$ and the rest satisfy $\lambda_i \in (0,1)$.
\end{proof}

\begin{lemma}\label{lemma_eigS}
The eigenvalues of matrix $S=S_2S_1=\hat{A}^{-1}B(I-\delta_u\Delta t\mathbf{D}^{2})^{-1}$, such that $\mathbf{D}^{2}$ has homogeneous Dirichlet boundary conditions are non-negative and less than $1$.
\end{lemma}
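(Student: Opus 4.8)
The plan is to exploit the two facts already in hand: by Lemma~\ref{lemma_eigD2} the matrix $S_1=(I-\delta_u\Delta t\mathbf{D}^{2})^{-1}$ has real spectrum contained in $(0,1]$, and the text has observed that $S_2=\hat A^{-1}B$ is idempotent, hence a (generally oblique) projection whose only eigenvalues are $0$ and $1$. The first step is to reduce the problem to the range of that projection. If $\lambda\neq0$ is an eigenvalue of $S=S_2S_1$ with eigenvector $w$, then $w=\lambda^{-1}S_2(S_1w)\in\operatorname{Range}(S_2)$, so $S_2w=w$ and therefore $S_2S_1S_2\,w=\lambda w$. By the cyclic invariance of the nonzero spectrum, the nonzero eigenvalues of $S$ coincide exactly with those of the compression $S_2S_1S_2$, so it suffices to bound the eigenvalues of $S_2S_1S_2$ (the zero eigenvalues are automatically non-negative).

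The second, and central, step is to turn $S_2S_1S_2$ into a symmetric positive semidefinite problem by changing inner product. Because $\mathbf{D}^{2}$ has real eigenvalues (Lemma~\ref{lemma_eigD2}) it is diagonalizable over $\mathbb{R}$, say $\mathbf{D}^{2}=V\Lambda V^{-1}$; then $W:=V^{-T}V^{-1}$ is symmetric positive definite and makes $\mathbf{D}^{2}$, and hence $S_1$, self-adjoint, with $0\prec\tilde S_1:=W^{1/2}S_1W^{-1/2}\preceq I$. Conjugating $S$ by $W^{1/2}$ yields $\tilde S_2\tilde S_1$ with the same spectrum. If the same $W$ renders $S_2$ a $W$-orthogonal projection, then $\tilde S_2=\tilde S_2^{T}=\tilde S_2^{2}$, and the nonzero eigenvalues of $\tilde S_2\tilde S_1$ equal those of the symmetric matrix $\tilde S_1^{1/2}\tilde S_2\tilde S_1^{1/2}=(\tilde S_2\tilde S_1^{1/2})^{T}(\tilde S_2\tilde S_1^{1/2})\succeq0$, which moreover satisfies $\tilde S_1^{1/2}\tilde S_2\tilde S_1^{1/2}\preceq\tilde S_1\preceq I$. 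This delivers at once that every eigenvalue of $S$ is real, non-negative and at most $1$.

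To upgrade ``at most $1$'' to ``strictly less than $1$'', I would analyze the equality case. An eigenvalue $1$ forces, in the symmetrized picture, $\tilde S_1^{1/2}\tilde S_2\tilde S_1^{1/2}\tilde w=\tilde w$ with $\|\tilde S_2\|=1$ and $\tilde S_1\preceq I$; tracing the equality conditions through $\langle\tilde S_1\tilde w,\tilde w\rangle\le\|\tilde w\|^{2}$ shows that $\tilde w$ must lie in the eigenspace of $\tilde S_1$ for the eigenvalue $1$, i.e. correspond to a null vector of $\mathbf{D}^{2}$, and simultaneously be fixed by $S_2$. I would then argue that the null space of $\mathbf{D}^{2}$ (the four zero eigenvalues identified in Lemma~\ref{lemma_eigD2}) is not preserved by the interface/boundary projection $S_2$ except trivially, so no such $w\neq0$ exists and the value $1$ is excluded.

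The main obstacle is the conditional assumption invoked in the second step: the weight $W$ that symmetrizes $S_1$ will not in general make $S_2$ a $W$-orthogonal projection, since $S_2$ is an oblique projection whose range and kernel are dictated by the discrete Neumann and interface conditions rather than by $W$. I expect that closing this gap requires the explicit block forms of $\hat A$ and $B$ recorded above in (\ref{matrixAhat}) and (\ref{Bbmatrix}) for the two-subdomain case: one either verifies directly that $\operatorname{Range}(S_2)$ and $\ker(S_2)$ are $W$-orthogonal for the specific $W$ produced by the Chebyshev operator, or computes the compressed matrix entrywise and bounds it (for instance via Gershgorin discs). Establishing this compatibility, together with the strict separation from $1$, is where the real work lies; the reduction to the range and the symmetric-product bound are then routine.
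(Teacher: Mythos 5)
Your opening reduction (nonzero spectrum of $S_2S_1$ equals that of $S_2S_1S_2$, using idempotence of $S_2$) is sound, but the load-bearing step of your plan is stated as a conditional, and the condition is precisely what you cannot verify: that the weight $W=V^{-T}V^{-1}$ which makes $S_1$ self-adjoint \emph{also} makes $S_2=\hat A^{-1}B$ a $W$-orthogonal projection. There is no reason for this to hold --- $S_2$ is an oblique projection whose range and kernel are dictated by the discrete Neumann and interface conditions, entirely independently of the spectral decomposition of $\mathbf{D}^{2}$ --- and you concede as much in your final paragraph. So what you have is a proof outline whose central step is unproven, together with a fallback suggestion (Gershgorin bounds on Chebyshev-derived compressions) that is unlikely to be workable, since entries of Chebyshev differentiation matrices grow like $N^2$. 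Two further flaws: Lemma~\ref{lemma_eigD2} gives real eigenvalues of $\mathbf{D}^{2}$, which does not imply diagonalizability, so even the existence of your symmetrizer $W$ is unjustified; and your exclusion of the eigenvalue $1$ is built on the same unverified symmetric picture, so it collapses with it.

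The paper's own argument needs no inner-product compatibility at all; it is a direct structural computation that your approach misses. With the Dirichlet data imposed, $S_1$ in Eq.~(\ref{matrixS_1}) is block diagonal: identity entries at the four boundary/interface nodes, and interior blocks whose eigenvalues lie in $(0,1)$ by the reasoning of Lemma~\ref{lemma_eigD2} applied subdomain-wise. The matrix $B$ in Eq.~(\ref{Bbmatrix}) has zero columns at those four nodes and acts as the identity on interior rows, and $\hat A$ in Eq.~(\ref{matrixAhat}) couples only the special nodes among themselves; hence $S_2=\hat A^{-1}B$, and therefore $S=S_2S_1$ in Eq.~(\ref{matrixS2S1}), again has zero columns at the four special nodes, while its interior--interior part coincides exactly with the interior blocks of $S_1$. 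Permuting the four special indices to the end puts $S$ in block lower-triangular form $\begin{pmatrix} M & 0\\ K & 0\end{pmatrix}$ with $M$ the block-diagonal interior part of $S_1$, so the spectrum is exactly four zeros together with the eigenvalues of $M$, i.e.\ non-negative and strictly less than $1$ --- the strict bound included, with no separate equality-case analysis. The essential insight, absent from your proposal, is that within one time step the interface/boundary correction never feeds back into the interior unknowns, so the spectrum splits by sparsity alone.
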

\begin{proof}
The $S_1$ matrix with two subdomains is given by
\begin{equation}\label{matrixS_1}
S_1=\left(
\begin{array}{llllllllll}
1 & 0 &\cdots  &0&0 & 0&0&\cdots&0&0\\
0&\mathbf{D}^{2,1}_{1,1}&\cdots&\mathbf{D}^{2,1}_{1,N_1-1}&0&0&0&\cdots&0&0\\
\vdots&\vdots&\ddots&\vdots&&&&&\vdots&\vdots\\
0&\mathbf{D}^{2,1}_{N_1-1,1}&\cdots&\mathbf{D}^{2,1}_{N_1-1,N_1-1}&0&0&0&\cdots&0&0\\
0&0&\cdots&0&1&0&0&\cdots&0&0\\
0&0&\cdots&0&0&1&0&\cdots&0&0\\
0&0&\cdots&0&0&0&\mathbf{D}^{2,2}_{1,1}&\cdots&\mathbf{D}^{2,2}_{1,N_2-1}&0\\
\vdots&\vdots&&&&&\vdots&\ddots&\vdots&\vdots\\
0&0&\cdots&0&0&0&\mathbf{D}^{2,2}_{N_2-1,1}&\cdots&\mathbf{D}^{2,2}_{N_2-1,N_2-1}&0\\
0&0&\cdots&0&0&0&0&\cdots&0&1\\
\end{array}\right)
\end{equation}
Clearly, each of the two sub-blocks have determinant different from zero and have real and non-negative eigenvalues less than $1$ . Now, the product $S_2S_1$ has the structure
\begin{equation}\label{matrixS2S1}
S_2S_1=\left(
\begin{array}{llllllllll}
0 & K_1^1 &\cdots  &K_{N_1-1}^1&0 & 0&0&\cdots&0&0\\
0&\mathbf{D}^{2,1}_{1,1}&\cdots&\mathbf{D}^{2,1}_{1,N_1-1}&0&0&0&\cdots&0&0\\
\vdots&\vdots&\ddots&\vdots&&&&&\vdots&\vdots\\
0&\mathbf{D}^{2,1}_{N_1-1,1}&\cdots&\mathbf{D}^{2,1}_{N_1-1,N_1-1}&0&0&0&\cdots&0&0\\
0&K_1^2&\cdots&K_{N_1-1}^2&0&0&0&\cdots&0&0\\
0&0&\cdots&0&0&0&K_{1}^3&\cdots&K_{N_2-1}^3&0\\
0&0&\cdots&0&0&0&\mathbf{D}^{2,2}_{1,1}&\cdots&\mathbf{D}^{2,2}_{1,N_2-1}&0\\
\vdots&\vdots&&&&&\vdots&\ddots&\vdots&\vdots\\
0&0&\cdots&0&0&0&\mathbf{D}^{2,2}_{N_2-1,1}&\cdots&\mathbf{D}^{2,2}_{N_2-1,N_2-1}&0\\
0&0&\cdots&0&0&0&K_{1}^4&\cdots&K_{N_2-1}^4&0\\
\end{array}\right)
\end{equation}
From here, clearly follows that has four eigenvalues zero and the rest are real, positive and less than one.
\end{proof}
\noindent We end this section with our main result
\begin{theorem}[Stability of the Method]
The numerical method given by equation \ref{diff_eq_diff}, with no flux boundary conditions and interface conditions, is stable.
\end{theorem}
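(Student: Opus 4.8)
The plan is to reduce the stability of the full diffusion step to the spectral information already isolated in Lemmas \ref{lemma_eigD2} and \ref{lemma_eigS}. Recall from \eqref{method_discrete} that one backward-Euler diffusion solve followed by the imposition of the boundary and interface conditions is exactly the linear recurrence $U^{n+1}=SU^n$ with $S=S_2S_1=\hat{A}^{-1}B(I-\delta_u\Delta t\,\mathbf{D}^2)^{-1}$. I would adopt stability in the usual Lax--Richtmyer sense, namely power-boundedness $\sup_n\|S^n\|<\infty$, so that $\|U^n\|\le\|S^n\|\,\|U^0\|$ stays bounded for every initial datum. The entire argument then comes down to controlling the powers of $S$.

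First I would invoke Lemma \ref{lemma_eigS}, which asserts that every eigenvalue of $S$ is real, non-negative, and strictly less than $1$; in particular the spectral radius satisfies $\rho(S)=\max_i|\mu_i|<1$. I would then apply the spectral-radius theorem (Gelfand's formula) $\lim_{n\to\infty}\|S^n\|^{1/n}=\rho(S)$: since $\rho(S)<1$, one may fix $r\in(\rho(S),1)$ with $\|S^n\|\le r^n$ for all large $n$, whence $\|S^n\|\to 0$ and \emph{a fortiori} $\sup_n\|S^n\|<\infty$. This gives $\|U^n\|\to 0$, so the diffusion step is stable, and because Lemma \ref{lemma_eigD2} bounds the spectrum of $S_1$ for any $\Delta t>0$, the stability is in fact unconditional. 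The identical argument with $\delta_v$ replacing $\delta_u$ handles the $V$-component, and the explicit bounded reaction update in the operator splitting does not affect the diffusion-dominated conclusion under study.

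The step I expect to carry the real weight is not the theorem itself but the \emph{strict} inequality $\rho(S)<1$ furnished by Lemma \ref{lemma_eigS}. Because $S$ is non-symmetric---the projection $S_2=\hat{A}^{-1}B$ is idempotent but not orthogonal---eigenvalue information alone would be insufficient for power-boundedness if any eigenvalue lay on the unit circle, since one would then have to exclude Jordan blocks there. What makes the argument clean is precisely that $S_2$ annihilates the neutral mode left over from the unit eigenvalues of $S_1$ (the kernel of $\mathbf{D}^2$ surviving the Dirichlet reduction), pushing the whole spectrum strictly inside the unit disk. I would therefore take care to confirm that the ``less than $1$'' in Lemma \ref{lemma_eigS} is genuinely strict for every mode, as that strictness is what renders the non-normality of $S$ harmless and lets $\rho(S)<1$ do all the work. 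Finally, I would note that although Lemmas \ref{lemma_eigD2} and \ref{lemma_eigS} are written out for two subdomains, the bordered block structure of $S_1$, $B$, and $\hat{A}$ persists for arbitrary $M$, so the same spectral conclusion---and hence stability---carries over verbatim.
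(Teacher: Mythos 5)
Your proposal is correct and follows essentially the same route the paper intends: the paper states the theorem immediately after Lemma \ref{lemma_eigS} with no written proof, the implicit argument being exactly that the spectral bound $0 \le \mu_i < 1$ on $S = S_2 S_1$ yields stability of the iteration $U^{n+1} = S U^n$. Your write-up simply makes the missing link explicit (Gelfand's formula turning $\rho(S)<1$ into power-boundedness, plus the observation that strictness of the eigenvalue bound is what neutralizes the non-normality of $S$), which is a faithful and more careful rendering of the paper's own reasoning rather than a different approach.
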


\subsection{Switching from coarse to refined meshes}\label{Sec:switch}

\noindent For the present manuscript, we have considered two mesh types. Each subdomain can have a coarse mesh with $N_c$ points or a fine mesh with $N_f$ collocation points. The coarse mesh is used at locations where the function has no representative changes in its value and the refined mesh is taken at subdomains with large derivative. The switching procedure from coarse to fine grids is as follows:  Initially we define a coarse mesh, with $N_c$ points in each subdomain, where the initial condition is defined. Then, we evaluate the first derivative of the variable that experiences abrupt changes. At subdomains where the absolute value of the first derivative is larger than a given value $\eta$, we proceed to refine the subdomain with $N_f$ points. For this, let the $kth$ mesh be the one to be refined. Let $U^{k,N_c}$ be the vector with $N_c$ components, which is the approximate solution at the $kth$ subdomain evaluated with the coarse mesh. Then, we find the corresponding frequency modes from the node values (See \cite{Shen_book})
\begin{equation}\label{Eq.modesak}
    a_k=\frac{1}{\tilde{c}_kN_c}\sum_{j=0}^{N_c}\frac{1}{\tilde{c}_j}U^{k,Nc}_jT_k(x_j), \qquad 0\leq k \leq N_c
\end{equation}
and $\tilde{c}_0=\tilde{c}_N=2$ and $\tilde{c}_k=1$, otherwise. Once the $a_k$ modes are found, we use the first relation in Eq. (\ref{expand}) evaluated at the refined mesh, thus giving $U^{k,N_f}$, the solution at the $kth$ evaluated with the refined mesh.

\noindent We end this section with some remarks:
\begin{enumerate}
    \item We follow the same procedure in order to change a refined mesh to a coarse mesh.
    \item The value of $\eta$ is obtained experimentally and has to be regulated depending on the equation to be solved. A large value of $\eta$ will not detect the fast transitions and a small enough values will lead to refine subdomains with small changes.
    \item In order to assure a correct propagation of the pulse, we also enforce all the immediate neighbors to be refined with the same $N_f$. In the case of the two-dimensional simulations, we also refine the eight subdomains around the refined one. At each time step, we calculate the first derivative in each subdomain and the value $\eta$ is the responsible to decide if mesh stays coarse or refined. Observe that refining the neighbors in both sides is needed even the traveling wave is one-directional. The main reason is that some RD equations, such as the Gray-Scott model, have the wave splitting phenomenon \cite{pet94}, in which suddenly a pulse splits into two directions. One traveling from left to right, and the other traveling from right to left.
\end{enumerate}


\section{Numerical results}\label{Sec_numres}

\noindent The purpose of the numerical simulations is to show the usefulness of our developed method. We present solutions for the one
and two-dimensional cases. For this work, all the computations were done in a Laptop HP Quad-Core Intel Core i7-6700HQ CPU 2.60GHz, with elementary OS 0.4.1 Loki (64-bit) in a serial code. Also, for the present simulations we have used the library chebdif.m from \cite{Baltensperger2006}.

\noindent For our simulations, two approaches are considered to solve the equations. The first method (CMD1), uses a fixed grid given by $M$ subdomains and $N$ collocation points in each subdomain in which the subdomains have only one point in common. The second, which is our proposed method (AMCMD1), uses $M$ subdomains, and for each subdomain, we can switch grids with $N_c$ points for the coarse grid and $N_f$ points for the refined grid.

\subsection{One dimensional problem}

\noindent  In order to verify the efficiency of the method, we consider three numerical experiments. The first is the convergence of CMD1. This experiment is done over a small space interval and short integration time. Convergence for AMCMD1 follows directly as it is a special case of CMD1. In the second experiment we verify the usefulness of AMCMD1. This is done by taking a much larger interval and use the speed of the pulse as a parameter to explore the trade between the computing time and
accuracy. The speed of the wave provides a robust measurement of the accuracy.
The last experiment shows a simulation of an even larger domain and present estimations about the improvement
gained with AMCMD1.

\noindent Convergence of CMD1 is provided by the use of the $l_\infty$ error between the exact and approximate solutions. In our case,
exact solutions are not available in an analytical form and therefore, our exact solution $u_e$ is given by a
solution obtained with CMD1 with a sufficiently large number of points such that other solutions with less
points than $u_e$ has an error less than $10^{-10}$. $l_\infty$ error between an approximated solution $u_a$ and $u_e$ is
 evaluated at the mesh where $u_a$ is defined. To find $u_e$ evaluated at the same mesh as $u_a$, we use (\ref{lobato}), (\ref{expand}) and interpolation.

\subsubsection{Fitzhugh-Nagumo pulses}

\noindent For the Fitzhugh-Nagumo equations (Eqs. \ref{FHNeq}), the physical parameters are $a=0.3$, $b=0.01$ and
$\epsilon=0.005$. The initial condition is given by
\begin{equation}\label{ic_fhn1d}
 u_0(x)=\frac{1}{\left(1+e^{4(x-r_1)}\right)^2}-\frac{1}{\left(1+e^{4(x-r_2)}\right)^2}
\end{equation}
and
\begin{equation}\label{ic_fhn1dv}
v_0(x)=0
\end{equation}
where $r_1=5$ and $r_2=2$. This initial condition evolves into a propagating pulse that
moves from left to right along the spatial domain.\\

\noindent \textbf{Convergence analysis for CMD1}

\noindent The convergence analysis is done for $x \in [0,10]$ and final time $T=0.5$. In order to observe convergence,
we compare numerical solutions to the "exact`` solution obtained with CMD1 with numerical parameters ($M=80$, $N=20$, $\Delta_t=2.5\times10(-7)$), which gives $N_p=1521$ discretization points. Table \ref{Table:convergenceFHN1}, clearly shows that by choosing ($M=70$, $N=20$), we obtain an error of the order of $10^{-11}$. \\
\begin{table}
 \begin{center}
 \begin{tabular}{||l | l| l| l ||}
 \hline
 $M$ & $N_p$ & $\Delta t$ & Error\\ [0.5ex]
 \hline\hline
 5  & 96  & $1.000(-5)$& $2.467(-3)$ \\
 \hline
 40 & 761 &$1.000(-6)$ & $1.585(-4)$ \\
 \hline
 50 & 951 & $5.000(-7)$& $5.285(-5)$ \\
 \hline
 60 & 1141 &$2.500(-7)$ & $1.697(-10)$\\
 \hline
 70 & 1331 &$2.500(-7)$ & $8.753(-11)$\\
 \hline
\end{tabular}
\caption{Numerical convergence for CMD1 to solve the one dimensional FHN equations (Eqs. \ref{FHNeq}). $N_p$ is the total number of discretization
points. $x \in [0,10]$. For all the computations $N=20$. }\label{Table:convergenceFHN1}
\end{center}
\end{table}

\noindent \textbf{Speed of the wave}

\noindent A second useful measure of the method's accuracy is the speed of the wave. In order to calculate it we solved Eq. (\ref{FHNeq}) for $x\in[0,200]$ and a final time $T=18$. In this case, convergence of the solution is more difficult to observe than in the previous experiment  due to the length of the domain and larger final time. The initial condition is as in Eq. (\ref{ic_fhn1d}) with $r_1=20$ and $r_2=10$.
In Table \ref{Table:convergencespeedCMD1FHN1}, we show that the speed of the wave is getting close to $9.0753$. With
CMD1, the solution with the least number of points and $\Delta t=1e(-4)$ was obtained with $M=600$ and $N=3$.
By taking  $M=500$ and $N=3$, the value of $\Delta t=1e(-5)$ is required for stability purposes.
\begin{table}
 \begin{center}
 \begin{tabular}{||l | l| l| l | l ||}
 \hline
 $M$ & $N_p$ & $\Delta t$ & speed & CPU time\\ [0.5ex]
 \hline\hline
500  & 1001  & $1.0000(-5)$& $9.1653$& 2,805\\
 \hline
 600  & 1201  & $1.0000(-4)$& $9.1873$& 168\\
 \hline
 1200 & 2401 &$1.0000(-4)$ & $9.1493$&  648 \\
 \hline
 1800 & 3601 & $1.0000(-4)$& $9.0980$& 1,237 \\
 \hline
 2300 & 4601 &$1.0000(-4)$ & $9.0776$ & 1,386\\
 \hline
 2500 & 5001 &$1.0000(-4)$ & $9.0724$& 1,757 \\
 \hline
  2800 & 5601 &$5.0000(-5)$ & $9.0753$ & 3,879\\
 \hline
\end{tabular}
\caption{Numerical convergence of the speed for CMD1 to solve the one dimensional FHN equations (Eqs. \ref{FHNeq}). Equation solved
for $t \in [0,18]$, $x\in[0,200]$ and $N=3$. CPU time in seconds.}\label{Table:convergencespeedCMD1FHN1}
\end{center}
\end{table}

\noindent Now, our aim is to come with AMCMD1 solutions that use considerably less number of points and computing time without losing
numerical accuracy. We run simulations with $M=60$, $N_c=3$, $N_f=20$, $\Delta t=0.0001$ and varied the value of $M$ in order
to have larger values of $\Delta t$. From Table \ref{Table:speedAMCMD1FHN1}, it is clear that by using either Euler's method or second order Runge-Kutta, the solutions have practically the same accuracy. Moreover, in most of the simulation only $6$ subdomains have $N_f$ points and the rest used $N_c$ points. This leads to much less number of points ($Np=462$) yet as accurate as the one obtained with CMD1 and $N_p=3601$ points. This clearly also leads to a faster solution eight times faster than the solution with CMD1.

\begin{table}
 \begin{center}
 \begin{tabular}{||l | l| l| l | l ||}
 \hline
 $M$ & $dt$ & Method & speed & CPU time\\ [0.5ex]
 \hline\hline
 1 & 1.0000(-4)  & RK2 & $9.0452$& 153.55\\
 \hline
 1  & 1.0000(-4)  & Euler & $9.0407$& 154.82 \\
  \hline
 10  & 5.0000(-5)  & RK2 & $9.0514$ & 89.97 \\
  \hline
 10  & 5.0000(-5)  & Euler & $9.0157$ &89.92 \\
 \hline
 20  & 5.0000(-5)  & RK2 & $-$ & $-$ \\
 \hline
 20  & 5.0000(-5)  & Euler & $8.9819$ & 71.78 \\
 \hline
 100  & 5.0000(-5)  & RK2 & $9.0444$ & 60.81 \\
 \hline
 100  & 5.0000(-5)  & Euler & $-$ & $-$ \\
 \hline
\end{tabular}
\caption{Performance of AMCMD1 to solve the one dimensional FHN equations (Eqs. \ref{FHNeq}) for
different values of $M$ and numerical integrator for the reactive part. $N_c=3$, $N_f=20$, $dt=0.0001$ and $M=60$.
$N_p=462$ is the total number of points used in order to solve the propagating pulse. CPU time in seconds.} \label{Table:speedAMCMD1FHN1}
\end{center}
\end{table}

\begin{table}
 \begin{center}
 \begin{tabular}{||l | l| l | l ||}
 \hline
 Method & $N_p$  &  speed & CPU time \\ [0.5ex]
 \hline\hline
 CMD1 & 12001    &$9.1858$& 45200\\
 \hline
 AMCMD1 & 486    &$8.9464$& 771\\
 \hline
\end{tabular}
\caption{Comparison of performance between CMD1 and AMCMD1 to solve the one dimensional FHN equations (Eqs. \ref{FHNeq}).
methods for large domains. CPU time in seconds.}\label{Table:AMCMD1_CMD1_FHN1_ld}
\end{center}
\end{table}

\noindent From the results, it follows that AMCMD1 provides wih good approximations with less points and less computational effort than CMD1.
This advantage becomes more evident when a much larger domain is taken. For this, we take the domain $x \in [0,2000]$, final
time $T=200$ and initial condition taken as in the previous study. The numerical parameters for AMCMD1 are $M=100$, $N_c=3$,
$N_f=60$, $\Delta t=0.0001$, $\eta=0.0001$ and $M=10$ and the time integrator is the second order Runge-Kutta. For CMD1, we took
$M=6000$ subdomains and $N=3$ points per subdomain. The result of this comparison is given in Table \ref{Table:AMCMD1_CMD1_FHN1_ld}. For
AMCMD1 only $5$ subdomains require $N_f$ points, whereas the rest use a coarse refinement. This implies that in average AMCMD1 requires
$486$ points to discretize the whole domain. Clearly, from the comparison, AMCMD1 is $58$ times faster than CMD to obtain the solution for the same integration time. Also, AMCMD1 only requires twenty-four times less points than CMD1.

\begin{figure}[h]
	\centering
	\includegraphics[width=13cm, height=4cm]{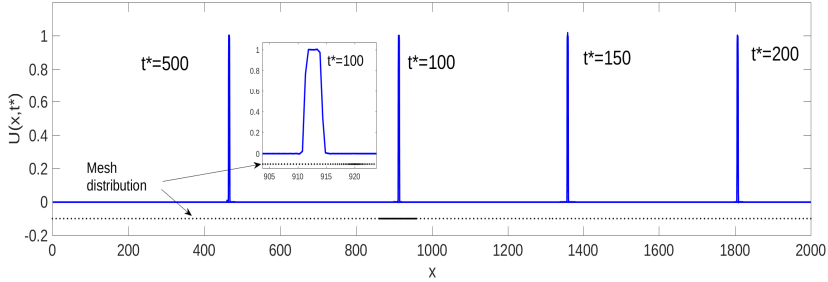}
	\caption{Numerical solutions for the one dimensional FHN equations (Eqs. \ref{FHNeq}) for different integration times. A zoom to the
	propagating pulse and mesh distribution for time $t^*=100$ is also shown.}
	\label{Fig:pulses_FHN_1d_AMCMD1}
\end{figure}

\noindent In Figure \ref{Fig:pulses_FHN_1d_AMCMD1} we show the numerical solution for the one-dimensional pulse at $t^*=50, 100, 150$ and $200$.
Within the figure, we show the zoomed image of the pulse for $t^*=100$. It is important to stress out that for the current simulation, the width of the pulse is approximately 3 space units, whereas the distance between two consecutive points in the coarse mesh is approximately 10 space units. This means that pulses cannot propagate on the coarse mesh. On the other hand the maximum distance between two consecutive points in the refined mesh is approximately 0.5, whereas the minimum distance is approximately $=0.014$.

\subsubsection{Gray-Scott pulses}

\noindent  The Gray-Scott model (Eq. \ref{GSeq}) is solved with parameters $\tau_r=315$, $k_2=1/40$
$b_0=1/15$ and $\delta=7$. A numerical convergence study is done over the interval $x \in [0,25]$ and final time $T=30$. The initial condition is given by
\begin{equation}\label{IC_GS_1d}
u_0(x)=0.936955 \qquad \textrm{and} \qquad v_0(x)=\frac{0.1}{\left(1+\exp{\frac{\sqrt{6\rho} x-5\rho k}{6}}\right)}+0.014615
\end{equation}
where $\rho=10000$ and $k=0.1$. In this case, the values $u_0=0.936955$ and $v_0= 0.014615$ are the coordinates of the asymptotically
stable fixed point of the system with no diffusion. Therefore, the initial condition considered is just a spatial perturbation of this fixed point, in order to obtain a
solitary propagating pulse.\\

\noindent \textbf{Convergence analysis for CMD1}\\
\noindent Here, we took the exact solution as the one obtained with CMD1 and ($M=220,N_c=10$ and $\Delta t=2.5e(-6)$). Table \ref{Table:convergenceGS1}, show the convergence of the method. As we are tracking the position of a solitary pulse,
the computed error for Table \ref{Table:convergenceGS1} becomes difficult to reduce as the computation becomes
costly.

\begin{table}
 \begin{center}
 \begin{tabular}{||l | l| l| l ||}
 \hline
 $M$ & $N_p$ & $\Delta t$ & Error\\ [0.5ex]
 \hline\hline
 40  & $361$  & $2.5000(-5)$& $2.2021(-4)$ \\
 \hline
 60 & $541$ &$2.5000(-5)$ & $6.0457(-5)$ \\
 \hline
 110 & $991$ & $1.0000(-5)$& $1.5517(-6)$ \\
 \hline
 180 & $1621$ &$2.5000(-6)$ & $3.6260(-7)$\\
 \hline
 200 & $1801$ & $2.5000(-6)$ & $4.6185(-8)$\\
 \hline
\end{tabular}
\caption{Numerical convergence for CMD1 method to solve the Gray-Scott equations (Eqs. \ref{GSeq}) ($N_c=10$), for
the domain $x \in [0,25]$ and $T=30$.} \label{Table:convergenceGS1}
\end{center}
\end{table}

\noindent Following the previous example, we focus on the numerical approximation of the speed of the wave as in \cite{os09}
as it provides a better appreciation of the performance of the method. Table \ref{Table:convergencespeedCMD1GS1} shows the results for CMD1 for a relatively large size of the domain $x\in[0,2000]$, final time $T=4000$ and the initial condition Eq. (\ref{IC_GS_1d}). From the table, it is clear that CMD1 can be computed fast with relatively few points ($N_p=601$). By repeating the same experiment for AMCMD1, we note that one of the best choices is given by the solution with $M=20$, $N_c=3$, $N_f=10$, $dt=0.005$, $\eta=0.0001$ and $M=200$.
Under this situation the speed of the wave becomes $c=0.2185021$. The computation requires $N_p=76$ points as the pulse is contained in only $5$ subdomains in average at each time instant during the computation. However, the computation time takes approximately $102$ seconds, which
becomes slower compared to CMD1 although the number of collocation points has been reduced considerably.

\begin{table}
 \begin{center}
 \begin{tabular}{||l | l| l| l | l ||}
 \hline
 $M$ & $N_p$ & $\Delta t$ & speed & CPU time\\ [0.5ex]
 \hline\hline
300  & 601  & $5.0000e(-1)$& $0.230076 $& 16\\
 \hline
 400  & 801  & $5.0000e(-1)$& $0.227390$ & 22\\
 \hline
  500 & 1001 &$5.0000e(-1)$ & $0.225324$&  26.7 \\
 \hline
 600 & 1201 & $1.0000e(-1)$& $0.226311$&160  \\
 \hline
700 & 1401 &$1.0000e(-1)$ & $0.225450$ & 197\\
 \hline
 700 & 1401 &$1.2500e(-3)$ & $0.226024$& 14,373 \\
 \hline
  800 & 1601 &$1.0000e(-1)$ & $0.224876$& 215 \\
 \hline
  800 & 1601 &$1.2500e(-3)$ & $0.225440$& 14,109 \\
 \hline
\end{tabular}
\caption{Numerical convergence of the speed for CMD1 to solve the one dimensional GS equations (Eq. \ref{GSeq}). $x \in [0,2000]$, $T=4000$ and $N_c=3$. CPU time in seconds.} \label{Table:convergencespeedCMD1GS1}
\end{center}
\end{table}

\noindent However, as observed in the case of the FNH equations, the real advantage of AMCMD1 comes when large simulation domains compared to the
width of the pulse is taken. For example, if we extend our simulation to the domain $x\in[0,20000]$ and $T=64000$ it is possible to use AMCMD1
in a much more advantageous way compared to CMD1. In Figure \ref{Fig:pulses_GS_1d_AMCMD1} we show a pulse propagation obtained with the GS equations (Eq. \ref{GSeq}) in such domain.

\begin{figure}[h]
	\centering
	\includegraphics[width=13cm, height=4cm]{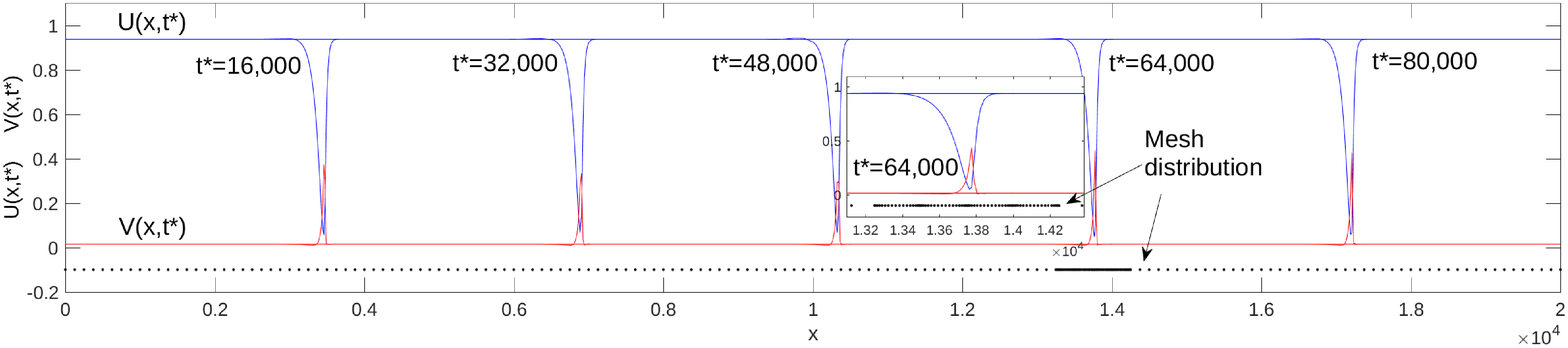}
	\caption{Numerical solutions for the one dimensional Gray-Scott equations (Eqs. \ref{GSeq}) for different integration times. A zoom to the
	propagating pulse and mesh distribution for time $t^*=64,000$ is also shown.}
	\label{Fig:pulses_GS_1d_AMCMD1}
\end{figure}

\subsubsection{CPU time versus the size of the domain}

\noindent From the previous computations, clearly AMCMD1 becomes a better option for relatively larger domains. In
order to test the efficiency of the method, we calculated the speed of the wave for different size domains such that the
speed of the wave is $c\approx 0.22\pm 0.01$. In Fig \ref{Fig:CPUandNpvsL} the CPU time and the total number of discretization
points versus the length of the domain $L$, is shown. From the figure, it follows that for small values of $L$, CMD1 performs better than AMCMD1,  even the number of discretization points is similar. However, AMCMD1 takes over when $L$ becomes larger. From the figure, for a fixed but large $L$ the CPU time with AMCMD1, halves the one obtained with CMD1 but with a much smaller amount of points. The advantage of AMCMD1 on this one dimensional problem relies on having only one single propagating pulse. AMCMD1 will slow down when more than one pulse are taken into account.

\begin{figure}[h]
	\centering
	\includegraphics[width=13cm, height=5cm]{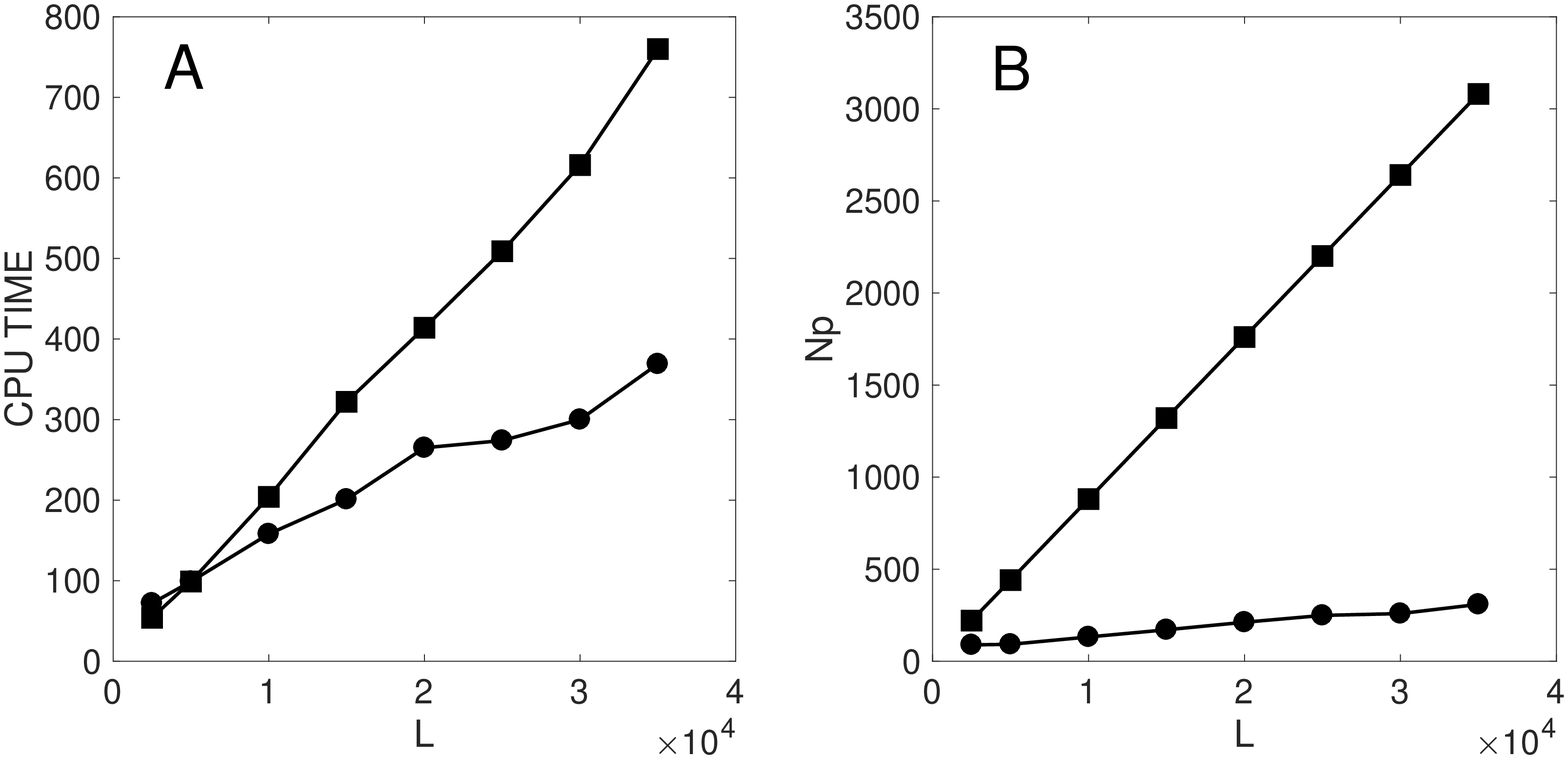}
	\caption{Performance of CMD1 and AMCMD1 for the Gray-Scott equations (Eqs. \ref{GSeq}) . (A) CPU time versus $L$ and (B)
	Number of discretization points $N_p$ versus $L$. CMD ($\blacksquare$) and AMCMD1 ($\bullet$).}
	\label{Fig:CPUandNpvsL}
\end{figure}

\subsection{Two-dimensional problem}

\noindent Wave propagation and pattern formation are the two more explored dynamics, numerically and analytically for RD systems. In the present section we approach both problems and observe how the AMCMD1 method performs in each case.

\noindent One of the main differences between the one and two-dimensional problems is the implementation of interface and boundary conditions. In order to implement such conditions for 2D, we follow the scheme shown in Fig. \ref{Fig:twodomainmeshBC}. In the figure, we show the case of two subdomains with three and four collocation points in each dimension, and where conditions are to be applied in the $x$ direction. The original collocation points are shown in black ($\bullet$) and ($\times$), for rectangular subdomains $1$ and $2$, respectively. In general, to apply the interface and boundary conditions for the first and last rows follow directly the one dimensional case (Section \ref{subSec:BIC}). In order to apply the interface and boundary conditions for the second row of the first subdomain, we need to find the values of the approximate solution evaluated at the points denoted by (\textcolor{blue}{$\diamond$}) in Subdomain $2$. Each value located at (\textcolor{blue}{$\diamond$}), is found by interpolating using the corresponding known values given at ($\times$) in the same column. This is done by finding the corresponding modes given by Eq. (\ref{Eq.modesak}), and then evaluating the approximation using Eq. (\ref{expand}) at the location (\textcolor{blue}{$\diamond$}). Likewise, for the second and third rows in Subdomain 2, we find the approximated value of the solution at (\textcolor{blue}{$\otimes$}) using the same procedure.

\begin{figure}[h]
	\centering
	\includegraphics[width=9cm, height=6cm]{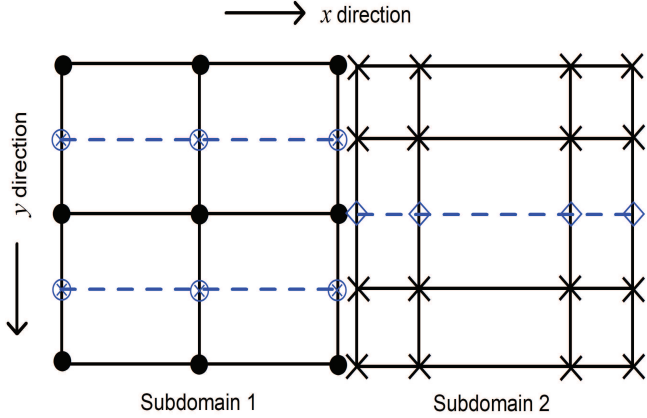}
	\caption{Implementation of interface and boundary conditions. The symbols ($\bullet$) and ($\times$), are the locations of the meshes at each of the two subdomains. The values of the approximated solution are found by interpolatig at points labeled as (\textcolor{blue}{$\diamond$}) and (\textcolor{blue}{$\otimes$}), which are required to apply interface and boundary conditions in the $x$ direction.}
	\label{Fig:twodomainmeshBC}
\end{figure}

\subsubsection{Fitzhugh-Nagumo solutions}
\noindent In this section, we include some simulations to show the evolution of the mesh and the solution. For this, we have considered the domain
$$\Omega=\left\{(x,y)|0\leq x\leq 1000, 0\leq y\leq 1000 \right\}.$$
For the Fitzhugh-Nagumo equations (Eqs. \ref{FHNeq}), the physical parameters are $a=0.3$, $b=0.01$ and
$\epsilon=0.005$. The initial condition is given by
\begin{equation}\label{ic_fhn2d}
 u_0(x,y)=\left\{\begin{array}{cll}\frac{1}{\left(1+e^{4(x-450)}\right)^2}-\frac{1}{\left(1+e^{4(x-445)}\right)^2} & \textrm{for} & y \geq 150 \\
 0 & \textrm{for} & y<350
 \end{array}\right.
\end{equation}
and $v_0(x,y)=0.3$ for $x \leq 445$ and $0$ otherwise.

\noindent Two-dimensional solutions of the FHN type equations can be self sustained propagating waves named spiral waves. Fig. \ref{Fig:FHN_2dspiral} shows a spiral wave solution
for $T=117$ where $N_{ix}=N_{iy}=80$, $N_{ccx}=N_{ccy}=3$ and $N_{cfx}=N_{cfy}=15$. AMCMD1 captures the fast transitions with the finer mesh
and a spiral wave is obtained. Observe that the pulse has an extremely small width compared to the size of the domain. This is not always the case. We
can still have the width of the pulse with a similar size of the domain as in \cite{fe02}. In that situation, the refinement is used to solve the presence of the front of the pulse only.

\begin{figure}[h]
	\centering
	\includegraphics[width=10cm, height=10cm]{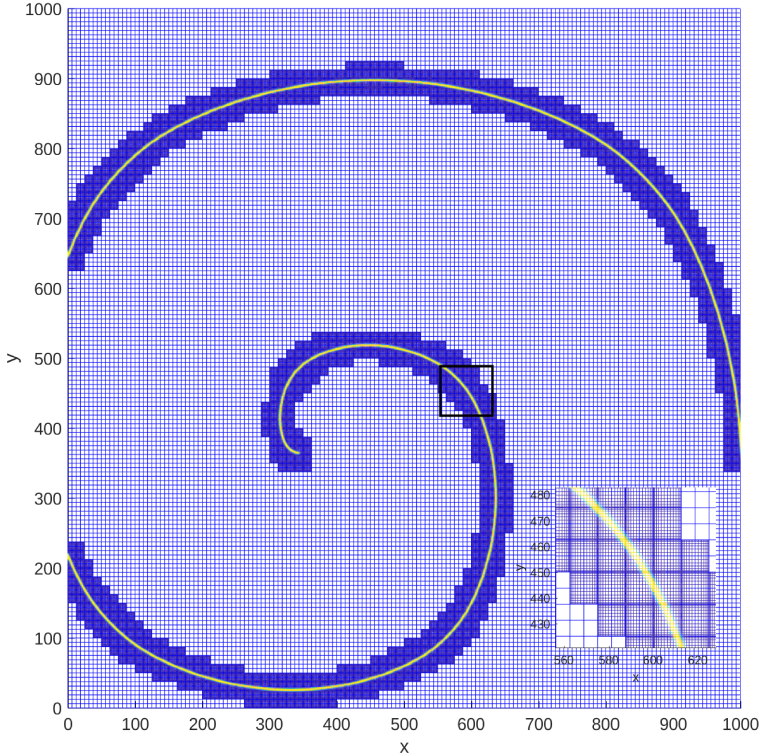}
	\caption{Numerical solution obtained with AMCMD1 for the two- dimensional Fitzhugh-Nagumo equation at $T=117$.}
	\label{Fig:FHN_2dspiral}
\end{figure}

\begin{figure}[h!]
\begin{tabular}{c}
\begin{minipage}{1.5in}
\centering
\includegraphics[width=4.0in,height=1.9in]{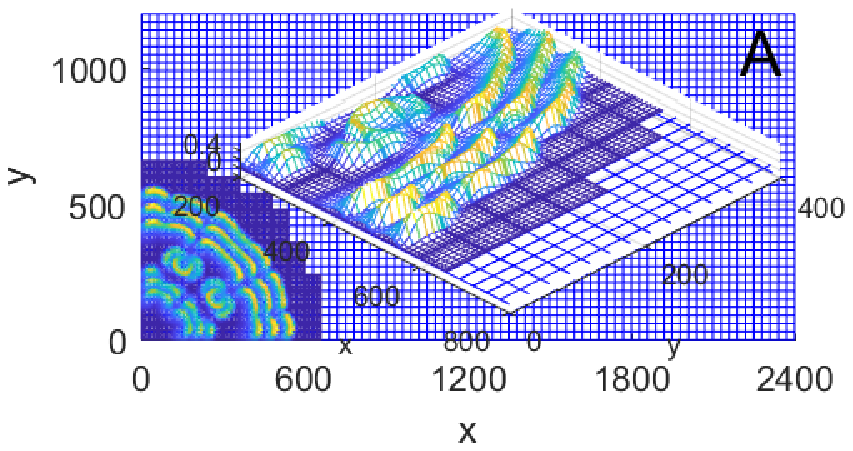}
\end{minipage}
\\
\begin{minipage}{1.5in}
\centering
\includegraphics[width=4.0in,height=1.9in]{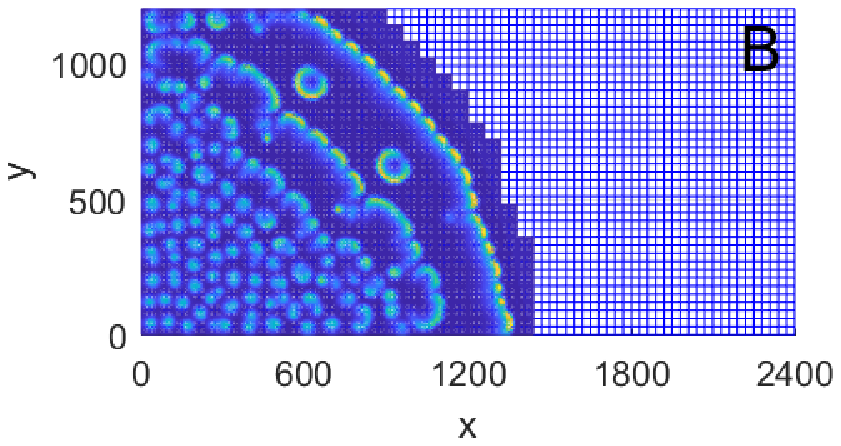}
\end{minipage}
\\
\begin{minipage}{1.5in}
\centering
\includegraphics[width=4.0in,height=1.9in]{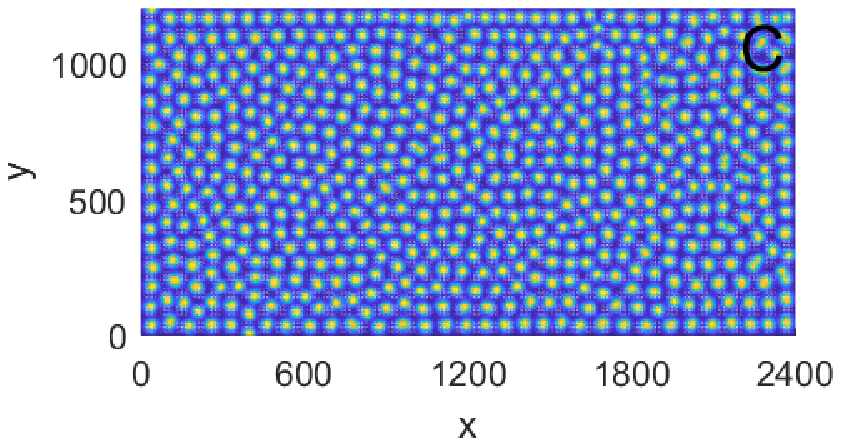}
\end{minipage}
\end{tabular}
	\caption{Numerical solution obtained with AMCMD1 for the two- dimensional Gray-Scott model for integration time A) $T=1600$,
	B) $4400$ and C) $11,200$.}
	\label{Fig:GS_2dpattern}
\end{figure}

\subsubsection{Gray-Scott solutions}

\noindent We finish our work by presenting numerical solutions for the Gray-Scott equations in 2D. In this case,
the main interest is to find time independent patterns. Then, we solve Eq. \ref{GSeq} in 2D ($\Omega=[0,2400]\times [0,1200]$) with parameters $\tau_r=315$, $k_2=1/40$
$b_0=1/15$ and $\delta=7$.
The initial conditions are given by
$$u_0(x)=0.936955$$
and
$$
 v_0(x)=0.1\left(\frac{1}{\left(1+\exp{40(r-150)}\right)^2}-\frac{1}{\left(1+\exp{40(r-100)}\right)^2}\right)+0.014615
$$
with $r=\sqrt{x^2+y^2}$. The obtained patterns are shown in Fig. \ref{Fig:GS_2dpattern} for different $t$ values. In this case, observe that the solution is different from the one dimensional
case, as new pulses are formed. Clearly, by taking $\delta=7$ \cite{pet94} we obtain solitary pulses that bounce at the boundary. However, for the two-dimensional case, we have taken
a circular pulse as an initial condition and therefore propagates from the lower left corner to the rest of the domain. The curvature of the front plays a similar role of increasing the value of $\delta$ for a planar wave and therefore, we obtain the fingering phenomenon for the two-dimensional case \cite{pet94}.

\noindent In Fig. \ref{Fig:GS_2dpattern} we observe that as the pulse propagates from left to right, fast transitions given by the invading pulse and the presence of dots implies that our domain gets completely covered with refined subdomains. This is different to the FHN equations, where the refined domains do not cover the whole domain and just follow the evolution of the spiral wave. The solution in Fig. \ref{Fig:GS_2dpattern} shows the phenomenon of pattern formation due to wave propagation as shown in \cite{Muratov2001}. In this case, the AMCMD method might not be the best choice if we have most of the subdomains in the refined way. Therefore, AMCMD1 might be a good choice when the number of refined subdomains is small compared to the total of subdomains. In the other case, CMD1 may become a better choice.

\section{Discussion and Conclusions}\label{Sec_Discussion}

\noindent In the present work we developed a numerical method that solves the reaction diffusion equations based on Chebyshev polynomials that consider an adaptive mesh. When there
are steep gradients the method automatically refines the grid. The proposed method, AMCMD1, is based on CMD1 in which we follow
a multidomain approach and there is only one common point between subdomains. The method CMD1 presented here for comparisons, has neither been presented elsewhere.

 \noindent AMCMD1 has proven to be much faster than CMD1 in the one dimensional case where the propagation of a single pulse is
studied. AMCMD1 can still be faster than CMD1 if the number of pulses is relatively low. CMD1 may perform better when there are several pulses traveling in the domain, e.g. when there is periodic propagation of pulses such as the ones obtained with the Oregonator \cite{ty94}.

\noindent As discussed in \cite{huang94} our method lies in the category of static adaptive methods, which the authors claim to be inefficient due to the continual readjustment of the mesh. Certainly, AMCMD1 falls into this category; however, as shown in our two dimensional simulations, our method is able to solve equations where multiple pulses
evolve simultaneously, including the birth and death of pulses.

\noindent For the one dimensional pulses obtained with the GS model, we also tested the property of reflection of pulses at the boundary as observed in \cite{pet94}. It is important to
mention that the choice of $\eta$ was crucial to obtain the real physical phenomenon. By taking a large value of $\eta$, the reflection phenomenon of the pulses is not obtained. Therefore, in order to apply the AMCDM1 method it is necessary to adjust some of the parameters before obtaining a physically plausible solution.

\noindent In order to obtain the best mesh for a particular problem, one has to adjust the parameters, $M$, $N_c$, $N_f$, $dt$, $K$ and $\eta$. Once we find a good set of parameters for a domain of
length $L$, it is straightforward to find the parameters for a different lenght $L$.

\noindent In the present work we have presented simulations for different choices of the numerical parameters but we did not focus on the optimal choice of these parameters. It still remains as an open question to find such optimum values.

\noindent This work can be extended by considering at least three levels of refinement of the mesh, instead of the two given in this work with $N_c$ and $N_f$. Clearly from the one dimensional simulations, the vast majority of the domain had absence of a propagating front.
In that case, we can think of splitting the main domain into $N_1$ subdomains and in each of them, have another refinement with refined and coarsed grids. This procedure will reduce even further the computational cost.

\bibliographystyle{siamplain}
\bibliography{biblio_jung_olmos}

\end{document}